\documentclass[final]{siamltex}

\pdfoutput=1

\usepackage{epsfig}
\usepackage[english]{babel}
\usepackage{amssymb,amsmath,amsfonts}
\usepackage{mathrsfs}
\usepackage{mdwlist}
\usepackage{graphicx,enumerate,url,hyperref,enumitem,multirow,multicol}
\usepackage[usenames,dvipsnames,svgnames]{xcolor}
\hypersetup{pdfauthor={Feng Xue},
    pdftitle={The nonexpansive operator with degenerate metric},
    colorlinks=true,
    linkcolor=blue,
    citecolor=red,
    urlcolor=blue,
}

%\usepackage{algorithmic}
% instead of algorithmic, can use algorithmicx in compatible mode:
\usepackage{algorithm}
\usepackage{algcompatible}
\algnewcommand\algorithmicreturn{\textbf{return}}
\algnewcommand\RETURN{\State \algorithmicreturn}%
\usepackage[outercaption]{sidecap}

%\usepackage[ruled,vlined]{algorithm2e} % do this BEFORE loading algorithmicx (this was not doing anything)
 % from https://tex.stackexchange.com/q/107459
\usepackage{tikz}
\usepackage{bm}

\usepackage{pgfplots}
\pgfplotsset{
  tick label style={font=\footnotesize},
  label style={font=\footnotesize},
  legend style={font=\footnotesize}
}

\usepackage{booktabs}
\usepackage{caption,subcaption}

\usepackage{todonotes}
%\usepackage[colorinlistoftodos, textwidth=4cm, shadow]{todonotes}
%\usepackage[displaymath, tightpage]{preview}
%\listoftodos % displays a list of todos

%\usetikzlibrary{plotmarks}

% Definitions.
% --------------------
%% style
%\usepackage{fullpage,layout,amsfonts,amsmath,amsthm}
%
%% shaded theorems
%\usepackage{mdframed,thmtools,enumitem} 
%
%\usepackage{amssymb,multirow,makecell,tabularx,tikz}
%
%%\newtheoremstyle{selfdefined}
%%{12pt}% Space above
%%{12pt}% Space below
%%{}% Body font \itshape =italian style 
%%{}% Indent amount
%%{\bfseries}% Theorem head font
%%{}% Punctuation after theorem heading
%%{\newline}% Space after theorem heading, 0.5em => in gleicher zeile weiter
%%{}% Theorem head spec (can be left empty, meaning ‘normal’)
%
%\definecolor{shadecolor}{gray}{0.95}
%\declaretheoremstyle[
%headfont=\normalfont\bfseries,
%notefont=\mdseries, notebraces={(}{)},
%bodyfont=\normalfont,
%postheadspace=0.5em,
%spaceabove=1pt,
%mdframed={
%  skipabove=8pt,
%  skipbelow=8pt,
%  hidealllines=true,
%  backgroundcolor={shadecolor},
%  innerleftmargin=4pt,
%  innerrightmargin=4pt}
%]{shaded}
%
%
%
%
% \usepackage{xcolor,color,graphicx}
%
%% algorithms
%\usepackage{algorithm}
%% \usepackage{algorithmic}
%\usepackage[noend]{algpseudocode}
%\usepackage{verbatim}
%
%\usepackage[algo2e]{algorithm2e} 

\newcommand{\bbstar}{x^\star}

\newcommand{\bb}{x}

\newcommand{\ran}{\mathsf{ran}}

\newcommand{\Fix}{\mathsf{Fix}}

\newcommand{\R}{\mathbb{R}}
\newcommand{\N}{\mathbb{N}}
\newcommand{\cI}{\mathcal{I}}
\newcommand{\cR}{\mathcal{R}}
\newcommand{\cS}{\mathcal{S}}

\newcommand{\cO}{\mathcal{O}}
\newcommand{\cT}{\mathcal{T}}
\newcommand{\cA}{\mathcal{A}}

\newcommand{\cF}{\mathcal{F}}

\newcommand{\cP}{\mathcal{P}}

\newcommand{\cH}{\mathcal{H}}

\newcommand{\cM}{\mathcal{M}}

\newcommand{\cQ}{\mathcal{Q}}

\newcommand{\cK}{\mathcal{K}}

\newcommand{\gra}{\mathrm{gra}}
\newcommand{\zer}{\mathsf{zer}}

\newcommand{\kersf}{\text{\sf ker}}

\newcommand{\be}{\begin{equation}}
\newcommand{\ee}{\end{equation}}

\DeclareMathOperator{\prox}{prox}       % proximal operator      
\newtheorem{assumption}{Assumption}

\newtheorem{remark}{Remark}
\newtheorem{fact}{Fact}
%\newtheorem{claim}[theorem]{Claim}
%\newtheorem{corollary}{Corollary}
%\newtheorem{exercise}[theorem]{Exercise}
%
%\theoremstyle{definition}
%\newtheorem{definition}[theorem]{Definition}

%
%\declaretheorem[style=shaded,within=section]{definition}
%\declaretheorem[style=shaded,sibling=definition]{theorem}
%\declaretheorem[style=shaded,sibling=definition]{proposition}
%\declaretheorem[style=shaded,sibling=definition]{assumption}
%\declaretheorem[style=shaded,sibling=definition]{corollary}
%%\declaretheorem[style=shaded,sibling=definition]{conjecture}
%\declaretheorem[style=shaded,sibling=definition]{lemma}
%\declaretheorem[style=shaded,sibling=definition]{example}
%\declaretheorem[style=shaded,numbered=no]{algorithm}

%\theoremstyle{plain}
%\newtheorem{theorem}{Theorem}  %[section]
%\newtheorem{lemma}[theorem]{Lemma} %[section]
%\newtheorem{proposition}[theorem]{Proposition} %[section] 
%\newtheorem{corollary}[theorem]{Corollary} %[section]

% \theoremstyle{definition}
%\newtheorem{assumption}{Assumption} %[section]
% \newtheorem{definition}{Definition}% [section]

%\theoremstyle{remark}
%\newtheorem{example}{Example} %[section]
%\newtheorem{remark}{Remark} %[section]
%\newtheorem{algorithm}{Algorithm} 

% \usepackage{hyperref}
%\usepackage[colorlinks=true,linkcolor=blue]{hyperref} 

%\usepackage{palatino} 

%\newcommand{\JF}[1]{{\todo[inline]{\textcolor{white}{JF:} #1}}}
%\newcommand{\PO}[1]{{\todo[inline]{\textcolor{white}{PO:} #1}}}
%\newcommand{\SB}[1]{{\todo[inline]{\textcolor{white}{SB:} #1}}}

\usepackage[normalem]{ulem} % for sout, can remove for final version

% Separate colors for comments.
%\newcommand{\JF}[1]{{\color{blue}{#1}}}
%\newcommand{\JFc}[1]{{\color{blue}{{\bf{JF}}:~#1}}}
%\newcommand{\SB}[1]{{\color{red}{#1}}}
%\newcommand{\SBc}[2]{{\color{red}{#1\sout{#2}}}}
%\newcommand{\PO}[1]{{\color{green!90!black}{#1}}}
%\newcommand{\POc}[2]{\colorlet{thisColorPeter}{.}\color{green!20!white}#1\color{green!90!black}#2\color{thisColorPeter}}

%%% COMPILE FLAG for PGF/TIKZ figures
% Since compilation takes a while, you might want to skip the compilation when you are working on the text. 
\newif\ifcompilePGFfigs
%\compilePGFfigsfalse    % activate this to skip compilation of convergence plots (deactivate the following command)
\compilePGFfigstrue

\graphicspath{{figs/}}

% Title.
% ------
\title{On the nonexpansive operators based on arbitrary metric: A degenerate analysis}

\author{Feng Xue\thanks{National Key Laboratory, Beijing, China (\tt{fxue@link.cuhk.edu.hk}).}  }
\date{\today}

\begin{document}

\maketitle

\begin{abstract}
We in this paper  study the nonexpansive operators equipped with arbitrary  metric and investigate the connections between firm nonexpansiveness, cocoerciveness and averagedness. The convergence of the associated fixed-point iterations is discussed with particular focus on the case of degenerate metric, since the degeneracy is often encountered when reformulating many existing first-order operator splitting algorithms as a metric resolvent. This work paves a way for analyzing the generalized proximal point algorithm with  a non-trivial relaxation step and degenerate metric. 
\end{abstract}

% REQUIRED
\begin{keywords}
Nonexpansive operator,  degenerate metric, convergence rates, metric resolvent
\end{keywords}

% REQUIRED
\begin{AMS}
68Q25, 47H05, 90C25, 47H09
\end{AMS}

\section{Introduction}
\subsection{Nonexpansive operators}
The nonexpansive mappings were extensively studied in some early works, e.g. \cite{baillon,borwein,kirk}, and the generalizations have recently been discussed in \cite{res_17,
res_14,res_16}. Refer to \cite{plc_book,hhb_corres} for the comprehensive treatments.

The notion of nonexpansiveness arises primarily in connection with the study of fixed-point theory, and underlies the convergence analysis of various fixed-point iterations.  Nowadays, there has been a revived interest in the design and analysis of the first-order operator splitting methods \cite{teboulle_2018}, of which many algorithms can be interpreted by the nonexpansive mappings, e.g. proximal forward-backward splitting algorithms \cite{attouch_2018,plc,prox_acha},  Douglas-Rachford splitting  \cite{boyd_control},  primal-dual splitting methods \cite{vu_2013,bot_2014}.  An analysis of the operator splitting algorithms from the perspective of nonexpansive mappings is given by \cite{ljw_mapr}, which reinterprets a variety of algorithms by a simple Krasnosel'ski\u{\i}-Mann iteration built from a nonexpansive operator. More recently, the work of \cite{plc_fixed} gives a systematic overview of operator splitting algorithms based on the  fixed-point theory. This demonstrates that  the nonexpansive mapping still plays a central role and constantly gains the popularity and attention in the related areas.

Recently, the nonexpansive mappings have been extended to arbitrary self-adjoint and positive definite (PD) metric in various specific forms, e.g. generalized proximity operator \cite{prox_acha}, Bregman-based proximal operator \cite{entropic_tsp,bregman_vietnam}, generalized resolvent \cite{res_3,hhb_resolvent}, which are fundamental for analyzing proximal mapping \cite{chaokan}, Bregman-based proximal schemes \cite{teboulle_2018}, variable metric Fej\'{e}r sequence \cite{plc_vu}, especially under the context of operator splitting algorithms.

\subsection{Motivations and contributions}
\label{sec_mot}
All of the existing works assume the metric to be self-adjoint and PD, e.g. \cite{arias_infimal,hhb_resolvent,warp,fxue_gopt}. However, in some scenarios, especially when the operator splitting schemes are reinterpreted by the generalized proximal point algorithm (with a non-trivial relaxation step\footnote{The `{\it non-trivial}' means that the relaxation operator is not  multiple of identity operator, see \cite[Eq.(2.8)]{hbs_jmiv_2017}, \cite[Eq.(3.5)]{hbs_prs} and \cite[Eq.(5.2)]{hbs_yxm_2015} for example.}), one has to establish the (firmly) nonexpansive results based on a non-self-adjoint operator (see \cite[Lemma 3.2]{hbs_siam_2012} and \cite[Theorem 3.1, Lemma 5.4]{hbs_jmiv_2017} for example). The intermediate results are essential for proving the convergence. In Sect. \ref{sec_operator}, we study the nonexpansive properties (e.g. firm nonexpansiveness, cocoerciveness and averagedness)  in the context of  arbitrary  metric, and present the weakest possible conditions under which those properties hold. Indeed, many of the properties are shown to be valid without the self-adjoint and PD condition. 

It is easy and straightforward   to extend many convergence properties presented in \cite{plc_book,ppa_guler} to the positive definite metric, by simply replacing ordinary norm $\|\cdot\|$ by a metric-based semi-norm \cite{bredies_2017,fxue_gopt}. However, it would be non-trivial to extend the classic results to the case of positive semi-definite (PSD) metric, since the distance in the PSD metric cannot measure the closeness between two points in whole space due to the non-trivial null space of the metric. We refer to the positive semi-definiteness as `{\it degeneracy}', which  implies that the degenerate metric-based nonexpansiveness can infer the convergence in a subspace only, but not in the whole space if without additional assumptions.  With particular focus on degenerate case, we in Sect. \ref{sec_fixed} analyze the properties of the metric distance for the fixed-point iterations, and further prove the convergence in the whole space under a certain mild conditions.

%(2) There is a certain redundancy in the fixed-point iteration such that the actual active variable lies in a proper subspace with smaller dimension.

All of the results presented in Sect. \ref{sec_operator} and \ref{sec_fixed} have important applications, of which a prominent example is the metric resolvent. Many operator splitting algorithms can be reinterpreted as the (degenerate) metric resolvents, and thus, the convergence properties can be easily obtained from the above results. This will be discussed in Sect. \ref{sec_appl}.

\subsection{Notations} \label{sec_notation}
We use standard notations and concepts from convex analysis and variational analysis, which, unless otherwise specified, can all be found in the classical and recent monographs \cite{rtr_book,rtr_book_2,plc_book,beck_book}.

A few more words about our  notations are in order. Let $\cH$ be a real Hilbert space, $D$ be a nonempty subset of $\cH$. $\cP_C$ denotes a projection onto a closed subset $C \subset\cH$.  The classes of PSD/PD linear operators are denoted by $\cM^+/\cM^{++}$, respectively.  The classes of self-adjoint, self-adjoint and PSD/PD linear operators are denoted by $\cM_\cS$, $\cM_\cS^+/\cM_\cS^{++}$, respectively.  For our specific use, the $\cQ$-norm for arbitrary metric $\cQ$ is defined as: $\|\cdot\|_\cQ^2 :=  \langle \cQ\cdot | \cdot \rangle$. Here,  $\cQ$ is {\it not} assumed to be self-adjoint and PSD, and hence, $\|\cdot\|_\cQ$ is not always well-defined, which is used only when $\cQ \in \cM^+$.  The strong and weak convergences are denoted by $\rightarrow$ and $\rightharpoonup$, respectively.

\section{The nonexpansive properties based on arbitrary  metric}
\label{sec_operator}
Throughout this paper, we assume that the nonexpansive operator $\cT$ is single-valued, but not necessarily injective.
In fact, $\cT$ is generally non-injective in the case of degenerate metric (see Sect. \ref{sec_appl} for example).

\subsection{Definitions based on arbitrary  metric}
Inspired by the work of \cite{bredies_2017}, the following definitions extend the classical notions of  Lipschitz continuity \cite[Definition 1.47]{plc_book}, nonexpansiveness \cite[Definition 4.1]{plc_book}, cocoerciveness \cite[Definition 4.10]{plc_book} and averagedness \cite[Definition 4.33]{plc_book} 
 to arbitrary ({\it not necessarily self-adjoint and PSD}) metric $\cQ$.

\begin{definition} \label{def_nonexpansive}
{\rm 
Let $\cQ$ be arbitrary  metric, then, the operator $\cT: D\mapsto \cH$ is:

(i) {\it $\cQ$--partly nonexpansive}, if:
\[
 \langle \cQ (\bb_1 - \bb_2) | \cT\bb_1-\cT\bb_2 \rangle
 \ge  \big\| \cT \bb_1 - \cT \bb_2  \big\|_\cQ^2.
 \]

(ii) {\it $\cQ$--nonexpansive}, if:
\[
 \big\| \cT \bb_1 - \cT \bb_2  \big\|_\cQ^2 \le 
 \big\|  \bb_1 - \bb_2 \big\|_\cQ^2.
 \]

(iii) {\it $\cQ$-based $\xi$--Lipschitz continuous}, if:
\[
 \big\| \cT \bb_1 - \cT \bb_2  \big\|_\cQ^2 \le 
\xi^2  \big\|  \bb_1 - \bb_2 \big\|_\cQ^2.
 \]

(iv) {\it $\cQ$--firmly nonexpansive}, if: 
\[
\big\| \cT\bb_1-\cT\bb_2 \big\|_\cQ^2 +  
\big\|(\cI - \cT) \bb_1 - (\cI - \cT) \bb_2  \big\|_\cQ^2 \le 
 \big\| \bb_1 - \bb_2  \big\|_\cQ^2.
 \]

(v) {\it $\cQ$--based $\beta$--cocoercive}, if $\beta \cT$ is $\cQ$--partly nonexpansive:
\[
\langle \cQ (\bb_1 - \bb_2) | \cT\bb_1-\cT\bb_2 \rangle
\ge \beta \big\| \cT \bb_1 - \cT \bb_2  \big\|_\cQ^2.
 \]

(vi) {\it $\cQ$--based $\alpha$--averaged} with $\alpha \in \ ]0,1 [$, if there exists a $\cQ$--nonexpansive operator $\cK: D \mapsto \cH$, such that $\cT = (1-\alpha) \cI + \alpha \cK$.
}
\end{definition}

\begin{remark}
The notions of {\it partly nonexpansive} and {\it $\beta$--cocoercive} are based on arbitrary metric $\cQ$, not limited to self-adjoint and PSD case.  Definition \ref{def_nonexpansive}--(ii) and (iii) use $\|\cdot\|_\cQ^2$ rather than $\|\cdot\|_\cQ$, because $\|\cdot\|_\cQ^2$ is always well defined for arbitrary $\cQ$, even if $\cQ$ is not PSD, as mentioned in Sect. \ref{sec_notation}.
\end{remark}

\vskip.1cm
The {\it  $\cQ$--based $\alpha$--averaged} is further generalized  as follows.
\begin{definition} \label{def_lip} {\rm
An operator $\cT: D \mapsto \cH$ is said to be  {\it  $\cQ$--based $\xi$--Lipschitz $\alpha$--averaged} with $\xi \in \ ]0,+\infty[$ and $\alpha\in \ ]0,1[$, if there exists a $\cQ$--based $\xi$--Lipschitz continuous operator $\cK: D \mapsto \cH$, such that $\cT = (1-\alpha) \cI + \alpha \cK$. In particular, if $\xi \in \ ]1, +\infty[$, $\cT$ is {\it $\cQ$--weakly averaged}; if $\xi \in \  ]0,1]$,  $\cT$ is {\it $\cQ$--strongly averaged}.
}
\end{definition}

To lighten the notation, we denote a family of $\cQ$--based $\xi$--Lipschitz $\alpha$--averaged operators  by $\cF^\cQ_{\xi,\alpha}$. Obviously, Definition \ref{def_nonexpansive}--(vi) is a special case of Definition \ref{def_lip} with  $\xi=1$, and thus, can be denoted by  $\cT \in \cF^\cQ_{1,\alpha}$.

\subsection{Nonexpansiveness}
This section presents the nonexpansive properties in the context of arbitrary  metric $\cQ$.  First, Definition \ref{def_nonexpansive} is connected via the following results.
\begin{lemma} \label{l_part}
$\cT: D \mapsto \cH$ is $\cQ$--partly nonexpansive, 

{\rm (i)} if and only if $\cT$ is $\cQ$--based 1--cocoercive;

{\rm (ii)} if $\cT$ is $\cQ$--based $\beta$--cocoercive with $\cQ \in \cM^+$ and $\beta  \in [1, +\infty[  $;

{\rm (iii)} if and only if $\cT$ is $\cQ$--firmly nonexpansive with $\cQ \in \cM_\cS$.
\end{lemma}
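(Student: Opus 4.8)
The plan is to treat the three parts separately; parts (i) and (ii) follow essentially by unwinding the definitions, while part (iii) carries the only real computation. For (i), I would observe that $\cQ$--based $1$--cocoercivity is, by Definition \ref{def_nonexpansive}--(v) specialized to $\beta = 1$, literally the defining inequality of $\cQ$--partly nonexpansiveness in Definition \ref{def_nonexpansive}--(i); the equivalence is therefore a tautology. For (ii), I would start from the $\beta$--cocoercivity inequality and chain a lower bound: because $\cQ \in \cM^+$ the term $\|\cT\bb_1 - \cT\bb_2\|_\cQ^2$ is nonnegative, and because $\beta \ge 1$ we may weaken $\beta\|\cT\bb_1 - \cT\bb_2\|_\cQ^2$ down to $\|\cT\bb_1 - \cT\bb_2\|_\cQ^2$, which yields the partly--nonexpansive inequality at once. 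The hypothesis $\cQ \in \cM^+$ is exactly what supplies the sign needed for this weakening, which is also why (ii) is only a one--way implication.

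The heart of the lemma is (iii). Here I would abbreviate $u := \bb_1 - \bb_2$ and $v := \cT\bb_1 - \cT\bb_2$, so that $(\cI - \cT)\bb_1 - (\cI - \cT)\bb_2 = u - v$, and expand the squared $\cQ$--norm of the difference:
\[
\|u - v\|_\cQ^2 = \|u\|_\cQ^2 - \langle \cQ u | v \rangle - \langle \cQ v | u \rangle + \|v\|_\cQ^2.
\]
Substituting into the firm--nonexpansiveness inequality $\|v\|_\cQ^2 + \|u - v\|_\cQ^2 \le \|u\|_\cQ^2$ and cancelling $\|u\|_\cQ^2$ leaves $2\|v\|_\cQ^2 \le \langle \cQ u | v \rangle + \langle \cQ v | u \rangle$. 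The decisive step, and the sole place where $\cQ \in \cM_\cS$ is used, is that symmetry gives $\langle \cQ v | u \rangle = \langle v | \cQ u \rangle = \langle \cQ u | v \rangle$, so the two cross terms coincide and the inequality collapses to $\|v\|_\cQ^2 \le \langle \cQ u | v \rangle = \langle u | v \rangle_\cQ$, which is precisely $\cQ$--partly nonexpansiveness. Since each manipulation is a reversible identity, the argument runs in both directions and delivers the stated equivalence.

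I expect the only genuine obstacle to be the bookkeeping in (iii): absent symmetry, the cross terms $\langle \cQ u | v \rangle$ and $\langle \cQ v | u \rangle$ need not agree, the clean factor of $2$ fails to appear, and the equivalence breaks down. The task is thus to pinpoint exactly where $\cQ \in \cM_\cS$ enters and to verify that every algebraic step is reversible, which is what upgrades the computation from a mere implication to a full \emph{if and only if}.
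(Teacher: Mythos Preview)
Your proposal is correct and follows precisely the route the paper intends: the paper's own proof is the single line ``Definition \ref{def_nonexpansive} and the conditions of $\cQ$,'' and your three parts simply unpack that line, with (i) a tautology, (ii) a monotone weakening using $\cQ\in\cM^+$ and $\beta\ge 1$, and (iii) the bilinear expansion of $\|u-v\|_\cQ^2$ together with symmetry of $\cQ$ to collapse the cross terms. There is nothing to add.
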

\begin{proof}
Definition \ref{def_nonexpansive} and the conditions of $\cQ$.
\hfill 
\end{proof}

\begin{lemma} \label{l_non}
$\cT: D \mapsto \cH$ is $\cQ$--nonexpansive, 

{\rm (i)} if and only if $\cT$ is $\cQ$--based 1--Lipschitz continuous;

{\rm (ii)} if $\cT$ is $\cQ$--based $\xi$--Lipschitz continuous with $\cQ \in \cM^+$ and $\xi \in \ ]0, 1]$;

{\rm (iii)} if $\cT$ is $\cQ$--firmly nonexpansive with $\cQ \in \cM^+$.
\end{lemma}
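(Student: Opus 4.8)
The plan is to read every equivalence and implication directly off the defining inequalities in Definition~\ref{def_nonexpansive}, exactly in the spirit of the proof of Lemma~\ref{l_part}; the positive-semidefiniteness hypothesis $\cQ\in\cM^+$ will enter only through the single fact that it makes the quantity $\|\cdot\|_\cQ^2$ nonnegative. For part~(i) I would specialize Definition~\ref{def_nonexpansive}--(iii) to $\xi=1$: the inequality $\|\cT\bb_1-\cT\bb_2\|_\cQ^2\le\xi^2\|\bb_1-\bb_2\|_\cQ^2$ collapses to $\|\cT\bb_1-\cT\bb_2\|_\cQ^2\le\|\bb_1-\bb_2\|_\cQ^2$, which is verbatim Definition~\ref{def_nonexpansive}--(ii). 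Since the two inequalities are literally identical, the equivalence holds for every $\cQ$, with no side condition.

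For part~(ii) I would start from the $\xi$--Lipschitz inequality $\|\cT\bb_1-\cT\bb_2\|_\cQ^2\le\xi^2\|\bb_1-\bb_2\|_\cQ^2$ and chain it with the elementary estimate $\xi^2\|\bb_1-\bb_2\|_\cQ^2\le\|\bb_1-\bb_2\|_\cQ^2$. The latter uses $\xi\le1$, hence $\xi^2\le1$, together with $\|\bb_1-\bb_2\|_\cQ^2=\langle\cQ(\bb_1-\bb_2)\,|\,\bb_1-\bb_2\rangle\ge0$; transitivity then delivers $\cQ$--nonexpansiveness.

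For part~(iii) I would begin from the firm-nonexpansiveness inequality $\|\cT\bb_1-\cT\bb_2\|_\cQ^2+\|(\cI-\cT)\bb_1-(\cI-\cT)\bb_2\|_\cQ^2\le\|\bb_1-\bb_2\|_\cQ^2$ and simply discard the second summand on the left. Writing $\bw:=(\cI-\cT)\bb_1-(\cI-\cT)\bb_2$, that summand equals $\langle\cQ\bw\,|\,\bw\rangle\ge0$, so dropping it only weakens the left-hand side and leaves $\|\cT\bb_1-\cT\bb_2\|_\cQ^2\le\|\bb_1-\bb_2\|_\cQ^2$, that is, $\cQ$--nonexpansiveness.

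There is no genuine analytic difficulty here; the one point that must be handled with care---and the reason $\cQ\in\cM^+$ cannot be dropped in (ii) and (iii)---is the nonnegativity of $\|\cdot\|_\cQ^2$, which the Remark following Definition~\ref{def_nonexpansive} explicitly warns may fail for a non-PSD $\cQ$. In both implications this nonnegativity is the sole structural fact invoked; once it is granted, each conclusion follows from a single monotone manipulation of a defining inequality.
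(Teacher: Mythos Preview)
Your proof is correct and follows exactly the approach the paper intends: the paper's own proof is the single line ``Definition~\ref{def_nonexpansive} and the conditions of $\cQ$,'' and your argument is precisely the unpacking of that sentence, using the definitions verbatim and invoking $\cQ\in\cM^+$ only to guarantee $\|\cdot\|_\cQ^2\ge 0$ where a term is dropped or a scalar factor $\xi^2\le 1$ is absorbed.
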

\begin{proof}
Definition \ref{def_nonexpansive} and the conditions of $\cQ$.
\hfill 
\end{proof}

\cite[Corollary 2.15]{plc_book} is also valid for arbitrary ({\it not necessarily self-adjoint and PSD}) metric $\cQ$, as stated below.
\begin{lemma} \label{l_id_1}
The following identity holds for any $\kappa \in\R$ and arbitrary $\cQ$:
\[
\big\| \kappa \bb_1 +(1-\kappa) \bb_2 \big\|_\cQ^2
= \kappa  \big\| \bb_1  \big\|_\cQ^2
+ (1-\kappa) \big\| \bb_2 \big\|_\cQ^2
- \kappa(1-\kappa)  \big\| \bb_1 - \bb_2 \big\|_\cQ^2.
\]
\end{lemma}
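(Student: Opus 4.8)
The plan is to prove the identity by direct expansion, exploiting only the bilinearity of the $\cQ$--based inner product and the defining relation $\|\ba\|_\cQ^2 = \langle \cQ\ba \mid \ba\rangle$. The crucial observation is that, although $\cQ$ need not be symmetric, the quantity $\|\cdot\|_\cQ^2$ is still a quadratic form, and the identity is purely algebraic: it holds for \emph{any} square matrix because the cross terms that would require symmetry cancel by themselves.

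First I would set $\bc := \kappa \bb_1 + (1-\kappa)\bb_2$ and compute $\|\bc\|_\cQ^2 = \langle \cQ\bc \mid \bc \rangle$ by expanding both arguments linearly. This produces four terms:
\[
\kappa^2 \langle \cQ\bb_1 \mid \bb_1\rangle
+ \kappa(1-\kappa)\langle \cQ\bb_1 \mid \bb_2\rangle
+ \kappa(1-\kappa)\langle \cQ\bb_2 \mid \bb_1\rangle
+ (1-\kappa)^2 \langle \cQ\bb_2 \mid \bb_2\rangle.
\]
The two diagonal terms are $\kappa^2 \|\bb_1\|_\cQ^2$ and $(1-\kappa)^2\|\bb_2\|_\cQ^2$; the middle two are the mixed terms, which I would keep grouped as $\kappa(1-\kappa)\big(\langle \cQ\bb_1\mid\bb_2\rangle + \langle \cQ\bb_2\mid\bb_1\rangle\big)$ without attempting to merge them into a single symmetric pairing.

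Next I would expand the claimed right-hand side. The term $-\kappa(1-\kappa)\|\bb_1-\bb_2\|_\cQ^2$ equals $-\kappa(1-\kappa)\big(\|\bb_1\|_\cQ^2 - \langle\cQ\bb_1\mid\bb_2\rangle - \langle\cQ\bb_2\mid\bb_1\rangle + \|\bb_2\|_\cQ^2\big)$, again using bilinearity on the difference. Adding $\kappa\|\bb_1\|_\cQ^2 + (1-\kappa)\|\bb_2\|_\cQ^2$ and collecting the coefficients of $\|\bb_1\|_\cQ^2$ and $\|\bb_2\|_\cQ^2$ gives $\kappa - \kappa(1-\kappa) = \kappa^2$ and $(1-\kappa) - \kappa(1-\kappa) = (1-\kappa)^2$, matching the diagonal terms above, while the mixed contribution $+\kappa(1-\kappa)\big(\langle\cQ\bb_1\mid\bb_2\rangle+\langle\cQ\bb_2\mid\bb_1\rangle\big)$ matches exactly. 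The two sides therefore coincide.

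The only point deserving care — and the reason the lemma is stated separately for general $\cQ$ — is that one must \emph{not} silently assume $\langle\cQ\bb_1\mid\bb_2\rangle = \langle\cQ\bb_2\mid\bb_1\rangle$, which would hold for symmetric $\cQ$ but fails in general. The proof must keep the two mixed terms as an unseparated sum $\langle\cQ\bb_1\mid\bb_2\rangle + \langle\cQ\bb_2\mid\bb_1\rangle$ throughout; since this same sum appears with coefficient $+\kappa(1-\kappa)$ on both sides, the asymmetry is immaterial and never needs to be resolved. Thus the identity is genuinely metric-agnostic, and this is the whole content of extending \cite[Corollary 2.14]{plc_book} to non-symmetric $\cQ$.
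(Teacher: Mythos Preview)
Your proof is correct and is essentially the same direct bilinear expansion as the paper's; the only cosmetic difference is that the paper packages the symmetric cross term $\langle\cQ\bb_1\mid\bb_2\rangle+\langle\cQ\bb_2\mid\bb_1\rangle$ as $\langle\bb_1\mid\bb_2\rangle_{\cQ+\cQ^\top}$ and invokes the polarization identity $\langle\bb_1\mid\bb_2\rangle_{\cQ+\cQ^\top}=\|\bb_1\|_\cQ^2+\|\bb_2\|_\cQ^2-\|\bb_1-\bb_2\|_\cQ^2$ before collecting terms, whereas you expand both sides separately and match. Your explicit remark that one must keep the two mixed terms as an unseparated sum (rather than assuming $\langle\cQ\bb_1\mid\bb_2\rangle=\langle\cQ\bb_2\mid\bb_1\rangle$) is exactly the point, and is implicitly what the paper's use of $\cQ+\cQ^\top$ encodes.
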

\begin{proof}
Noting that $\langle (\cQ+\cQ^*) \bb_1 |  \bb_2 \rangle= \|\bb_1\|_\cQ^2 + \|\bb_2\|_\cQ^2 - \|\bb_1 - \bb_2\|_\cQ^2$, we have:
\begin{eqnarray}
&& \big\| \kappa \bb_1 +(1-\kappa) \bb_2 \big\|_\cQ^2
\nonumber \\
&=& \kappa^2  \big\| \bb_1  \big\|_\cQ^2
+ (1-\kappa)^2 \big\| \bb_2 \big\|_\cQ^2
+ \kappa(1-\kappa) \langle (\cQ+\cQ^*)\bb_1 |  \bb_2 \rangle
\nonumber \\
&=& \kappa^2  \big\| \bb_1  \big\|_\cQ^2
+ (1-\kappa)^2 \big\| \bb_2 \big\|_\cQ^2
+ \kappa(1-\kappa) \big(  \|\bb_1\|_\cQ^2 + \|\bb_2\|_\cQ^2 - \|\bb_1 - \bb_2\|_\cQ^2 \big)
\nonumber \\
&=& \kappa  \big\| \bb_1  \big\|_\cQ^2
+ (1-\kappa) \big\| \bb_2 \big\|_\cQ^2
- \kappa(1-\kappa)  \big\| \bb_1 - \bb_2 \big\|_\cQ^2, 
\nonumber 
\end{eqnarray}
which completes the proof. \hfill 
\end{proof}

Lemma \ref{l_eq} is an extended version of  \cite[Proposition 4.2]{plc_book} for the case of arbitrary metric $\cQ$, which shows the equivalence between {\it partly nonexpansive} and {\it firmly nonexpansive}, in case of self-adjoint  $\cQ$. 
\begin{lemma} \label{l_eq}
Let $\cT: D \mapsto \cH$, then,
the following are equivalent:

{\rm (i)} $\cT$ is $\cQ$--partly nonexpansive;

{\rm (ii)} $\cT$ is $\cQ$--firmly nonexpansive with $\cQ \in \cM_\cS$; 

{\rm (iii)} $\cI - \cT$ is $\cQ$--firmly nonexpansive with $\cQ \in \cM_\cS$; 

{\rm (iv)} $2\cT - \cI$ is $\cQ$--nonexpansive with $\cQ \in \cM_\cS$;

{\rm (v)} $\cI - \cT$ is $\cQ^*$--partly nonexpansive. 
\end{lemma}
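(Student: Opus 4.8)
The plan is to introduce the shorthand $\ba := \bb_1 - \bb_2$ and $\bt := \cT\bb_1 - \cT\bb_2$, so that the output difference of $\cI - \cT$ is $\ba - \bt$ and that of $2\cT - \cI$ is $2\bt - \ba$. Rewriting each of (i)--(v) through these two vectors turns the lemma into a comparison of scalar inequalities, and the whole argument rests on ingredients already in hand: the polarization identity behind Lemma \ref{l_id_1}, namely $\|\ba - \bt\|_\cQ^2 = \|\ba\|_\cQ^2 + \|\bt\|_\cQ^2 - \langle \ba | \bt\rangle_{\cQ + \cQ^\top}$, together with the elementary facts that $\|\cdot\|_\cQ^2 = \|\cdot\|_{\cQ^\top}^2$ (the quadratic form sees only the symmetric part of $\cQ$) and $\langle \ba | \bc\rangle_{\cQ^\top} = \langle \bc | \ba\rangle_\cQ$.

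First I would collapse (ii), (iii), (iv) onto one common inequality. Expanding the firm-nonexpansiveness inequality $\|\bt\|_\cQ^2 + \|\ba - \bt\|_\cQ^2 \le \|\ba\|_\cQ^2$ with the polarization identity cancels the $\|\ba\|_\cQ^2$ terms and leaves $\langle \ba | \bt\rangle_{\cQ + \cQ^\top} \ge 2\|\bt\|_\cQ^2$. The condition in (iii) is literally the same inequality, since firm nonexpansiveness of $\cI - \cT$ reads $\|\ba - \bt\|_\cQ^2 + \|\bt\|_\cQ^2 \le \|\ba\|_\cQ^2$; and expanding $\|2\bt - \ba\|_\cQ^2 \le \|\ba\|_\cQ^2$ from (iv) likewise reduces to $\langle \ba | \bt\rangle_{\cQ + \cQ^\top} \ge 2\|\bt\|_\cQ^2$. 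Hence (ii), (iii), (iv) are mutually equivalent for any $\cQ$. Invoking the hypothesis $\cQ \in \cM_\cS$ then gives $\cQ + \cQ^\top = 2\cQ$, so $\langle \ba | \bt\rangle_{\cQ + \cQ^\top} = 2\langle \ba | \bt\rangle_\cQ$ and the common inequality becomes $\langle \ba | \bt\rangle_\cQ \ge \|\bt\|_\cQ^2$, which is precisely (i). This closes the loop (i)$\Leftrightarrow$(ii)$\Leftrightarrow$(iii)$\Leftrightarrow$(iv).

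The remaining and most delicate link is (i)$\Leftrightarrow$(v), which holds with no symmetry assumption and is where the transpose metric must be handled with care. Writing out $\cQ^\top$-partly nonexpansiveness of $\cI - \cT$ gives $\langle \ba | \ba - \bt\rangle_{\cQ^\top} \ge \|\ba - \bt\|_{\cQ^\top}^2$. I would rewrite the left side via $\langle \ba | \ba - \bt\rangle_{\cQ^\top} = \langle \ba - \bt | \ba\rangle_\cQ = \|\ba\|_\cQ^2 - \langle \bt | \ba\rangle_\cQ$ and the right side via $\|\ba - \bt\|_{\cQ^\top}^2 = \|\ba - \bt\|_\cQ^2 = \|\ba\|_\cQ^2 - \langle \ba | \bt\rangle_\cQ - \langle \bt | \ba\rangle_\cQ + \|\bt\|_\cQ^2$. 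The decisive point is that both $\|\ba\|_\cQ^2$ and the cross term $\langle \bt | \ba\rangle_\cQ$ cancel between the two sides---this is exactly why symmetry of $\cQ$ is not needed here---leaving precisely $\langle \ba | \bt\rangle_\cQ \ge \|\bt\|_\cQ^2$, i.e. statement (i).

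The calculations are routine once these two identities are in place; I expect the only genuine pitfall to be bookkeeping of the non-symmetric inner product---keeping $\langle \ba | \bt\rangle_\cQ$ and $\langle \bt | \ba\rangle_\cQ$ distinct and using $\langle \ba | \bt\rangle_{\cQ^\top} = \langle \bt | \ba\rangle_\cQ$ correctly---so that the asymmetric cross terms are merged into $\langle \ba | \bt\rangle_{\cQ + \cQ^\top}$ only when legitimate. Managing this carefully is what makes the (i)$\Leftrightarrow$(v) cancellation go through for arbitrary $\cQ$, while the equivalences (i)$\Leftrightarrow$(ii)--(iv) genuinely rely on $\cQ \in \cM_\cS$ to identify $\langle \ba | \bt\rangle_{\cQ + \cQ^\top}$ with $2\langle \ba | \bt\rangle_\cQ$.
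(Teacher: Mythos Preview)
Your proposal is correct and follows essentially the same route as the paper: reduce (ii)--(iv) to a single polarization-type inequality via Lemma~\ref{l_id_1} and then use $\cQ\in\cM_\cS$ to identify it with (i), while handling (i)$\Leftrightarrow$(v) by direct expansion with no symmetry assumption. The only cosmetic difference is in (i)$\Leftrightarrow$(v): the paper rewrites (i) as $\langle \ba-\bt \mid \bt\rangle_\cQ \ge 0$, adds $\|\ba-\bt\|_\cQ^2$ to both sides, and reads off (v), whereas you expand both sides of (v) and cancel; the two computations are equivalent.
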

\begin{proof}
(i)$\leftrightarrow$(ii)$\leftrightarrow$(iii)$\leftrightarrow$(iv):  Definition \ref{def_nonexpansive}, \cite[Proposition 4.2]{plc_book} and Lemma \ref{l_id_1}.

(i)$\leftrightarrow$(v): By Definition \ref{def_nonexpansive}--(i),  we have:
\[
\big \langle \cQ (\cI - \cT) \bb_1 - \cQ  (\cI - \cT) \bb_2 \big| 
\cT \bb_1 - \cT \bb_2 \big \rangle \ge 0.
\]
Adding $\|(\cI - \cT)\bb_1 - (\cI - \cT) \bb_2 \|_\cQ^2$ on both sides, we obtain:
\[
\big \langle \cQ (\cI - \cT) \bb_1 -\cQ  (\cI - \cT) \bb_2 \big|   \bb_1 -  \bb_2 \big \rangle  \ge 
\big\| (\cI - \cT)\bb_1 - (\cI - \cT) \bb_2 \big\|_\cQ^2,
\]
which leads to (v), noting that  $\big \langle \cQ (\cI - \cT) \bb_1 -\cQ  (\cI - \cT) \bb_2 \big|   \bb_1 -  \bb_2 \big \rangle = \big \langle  (\cI - \cT) \bb_1 -  (\cI - \cT) \bb_2 \big| \cQ^*(  \bb_1 -  \bb_2) \big \rangle$. 
\hfill 
\end{proof}

\begin{remark}
As mentioned in Sect. \ref{sec_mot}, Lemma \ref{l_eq} is very useful for proving the convergence and asymptotic regularity of the generalized proximal point algorithm with a non-trivial relaxation step, where $\cQ$ is not  self-adjoint and PD. More specifically, the new metric becomes $\cS=\cQ \cM^{-1}$ instead of $\cQ$, where $\cM$ is a linear relaxation operator, such that $\cS$ is self-adjoint and PD \cite{hbs_siam_2012,fxue_gopt}.
\end{remark}

\subsection{Averagedness, cocoerciveness and Lipschitz continuity}
\label{sec_ave}
The following results  extend \cite[Proposition 4.35, Remark 4.34, Remark 4.37, Proposition 4.39, Proposition 4.40]{plc_book} to arbitrary metric $\cQ $, which build the connections of $\cQ$--based 1--Lipschitz $\alpha$--averagedness (i.e.  $\cF^\cQ_{1,\alpha}$)  to other concepts.
\begin{lemma} \label{l_average}
Let $\cT: D \mapsto \cH$, then, the following hold.

{\rm (i)}  $\cT \in \cF^\cQ_{1,\alpha}$ with $\alpha\in \ ]0,1[$, if and only if:
\[
\big\| \cT\bb_1-\cT\bb_2 \big\|_\cQ^2 +  
\frac{1-\alpha}{\alpha}  \big\|(\cI - \cT) \bb_1 - (\cI - \cT) \bb_2  \big\|_\cQ^2 \le 
 \big\| \bb_1 - \bb_2  \big\|_\cQ^2.
\]

{\rm (ii)}  $\cT$ is $\cQ$--firmly nonexpansive, if and only if  $\cT \in \cF^\cQ_{1,\frac{1}{2}}$.

{\rm (iii)}  If  $\cT \in \cF^\cQ_{1,\alpha}$  with $\cQ \in \cM^+$ and $\alpha \in \ ]0,\frac{1}{2}]$, then $\cT$ is $\cQ$--firmly nonexpansive.

{\rm (iv)} Let $\alpha \in\  ]0,1[$, $\gamma \in \ ]0, \frac{1}{\alpha} [$, then,  $\cT \in \cF^\cQ_{1,\alpha}$, if and only if $(1-\gamma)\cI + \gamma \cT \in \cF^\cQ_{1,\gamma \alpha}$. 

{\rm (v)}  $\cT$ is $\cQ$--based $\beta$--cocoercive with $\cQ \in \cM_\cS$, if and only if $\beta \cT \in \cF^\cQ_{1,\frac{1}{2} }$.

{\rm (vi)}  Let $\cT$ be $\cQ$--based $\beta$--cocoercive with $\cQ \in \cM_\cS$. If $\gamma \in \ ]0, 2\beta[$, then, $\cI -\gamma\cT   \in \cF^\cQ_{1,\frac{\gamma}{2\beta} }$.
\end{lemma}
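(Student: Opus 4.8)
The plan is to invoke the characterization of the family $\cF^\cQ_{1,\alpha}$ provided by Lemma~\ref{l_average}--(i), applied to the candidate operator $\cI - \gamma\cT$ with the averagedness parameter chosen as $\alpha := \frac{\gamma}{2\beta}$. Since $\gamma \in \ ]0, 2\beta[$, one immediately has $\alpha \in \ ]0,1[$, so this characterization (and Definition~\ref{def_lip}) is legitimately applicable.

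First I would fix an arbitrary pair $(\bb_1,\bb_2) \in \cH\times\cH$ and abbreviate $\bd := \bb_1 - \bb_2$ and $\bt := \cT\bb_1 - \cT\bb_2$. With these, $(\cI - \gamma\cT)\bb_1 - (\cI - \gamma\cT)\bb_2 = \bd - \gamma\bt$, while the complementary displacement satisfies $(\cI - (\cI - \gamma\cT))\bb_1 - (\cI - (\cI - \gamma\cT))\bb_2 = \gamma\bt$. By Lemma~\ref{l_average}--(i), establishing $\cI - \gamma\cT \in \cF^\cQ_{1,\alpha}$ is therefore equivalent to verifying
\[
\|\bd - \gamma\bt\|_\cQ^2 + \frac{1-\alpha}{\alpha}\gamma^2\|\bt\|_\cQ^2 \le \|\bd\|_\cQ^2.
\]

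Next I would expand the leading term. Because $\cQ \in \cM_\cS$ is symmetric, the two cross terms coincide, $\langle\bd | \bt\rangle_\cQ = \langle\bt | \bd\rangle_\cQ$, so that $\|\bd - \gamma\bt\|_\cQ^2 = \|\bd\|_\cQ^2 - 2\gamma\langle\bd|\bt\rangle_\cQ + \gamma^2\|\bt\|_\cQ^2$. Substituting this, cancelling $\|\bd\|_\cQ^2$ from both sides, and using the identity $1 + \frac{1-\alpha}{\alpha} = \frac{1}{\alpha}$, the required inequality reduces to $\frac{\gamma^2}{\alpha}\|\bt\|_\cQ^2 \le 2\gamma\langle\bd|\bt\rangle_\cQ$. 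Since $\alpha = \frac{\gamma}{2\beta}$ gives $\frac{\gamma^2}{\alpha} = 2\beta\gamma$, dividing by $2\gamma > 0$ turns this into exactly the $\cQ$-based $\beta$-cocoercivity inequality $\langle\bd|\bt\rangle_\cQ \ge \beta\|\bt\|_\cQ^2$ of Definition~\ref{def_nonexpansive}--(v), which holds by hypothesis. This closes the argument.

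The only delicate point—rather than a genuine obstacle—is the reliance on the symmetry of $\cQ$ to merge the two cross terms into $-2\gamma\langle\bd|\bt\rangle_\cQ$; without $\cQ \in \cM_\cS$ one would instead be left with $-\gamma\langle\bd|\bt\rangle_{\cQ+\cQ^\top}$, which need not match the form of the cocoercivity hypothesis and the reduction would break. I would also confirm that it is the \emph{strict} bound $\gamma < 2\beta$ (not $\gamma \le 2\beta$) that keeps $\alpha$ strictly below $1$, as demanded by Definition~\ref{def_lip} and by the applicability of Lemma~\ref{l_average}--(i).
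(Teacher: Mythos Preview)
Your argument for part (vi) is correct, but it follows a different route from the paper. You proceed directly from the characterization in part~(i): with $\alpha=\gamma/(2\beta)$, you expand $\|\bd-\gamma\bt\|_\cQ^2$ using the symmetry $\cQ\in\cM_\cS$ and reduce the inequality of (i) exactly to the $\beta$-cocoercivity hypothesis. The paper instead appeals to part~(v), which recasts $\beta$-cocoercivity as $\beta\cT\in\cF^\cQ_{1,1/2}$, and then cites \cite[Proposition 4.33]{plc_book} for the passage to $\cI-\gamma\cT$. Your approach is more self-contained---a two-line computation on top of the already-established (i)---whereas the paper offloads the work to an external reference that the reader must adapt to the $\cQ$-metric framework. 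Both are valid; yours also makes transparent precisely where the symmetry assumption on $\cQ$ enters (merging the cross terms), a point the paper's citation leaves implicit.
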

\begin{proof}
(i) \cite[Proposition 4.35]{plc_book}--(iii) and Lemma \ref{l_id_1};

(ii) \cite[Remark 4.34]{plc_book}--(iii);

(iii) \cite[Remark 4.37]{plc_book};

(iv) \cite[Proposition 4.40]{plc_book}.

(v) Lemma \ref{l_average}--(ii), Lemma \ref{l_part}--(iii) and Definition \ref{def_nonexpansive}--(v).

(vi) Lemma \ref{l_average}--(v) and \cite[Proposition 4.39]{plc_book}.
\hfill 
\end{proof}

The following theorem, as a main result of this part, collects the key results of  $\cF^\cQ_{\xi,\alpha}$.
\begin{theorem} \label{t_lip}
Let  $\cT \in \cF^\cQ_{\xi,\alpha}$ with  $\xi \in \ ]0,+\infty[$ and $\alpha\in \ ]0,1[$. Then, the following hold.

{\rm (i)}  $\cT$ satisfies:
\be \label{xi_alpha}
\big\| \cT\bb_1-\cT\bb_2 \big\|_\cQ^2 \le   
(1-\alpha + \alpha \xi^2) \big\| \bb_1 - \bb_2  \big\|_\cQ^2
 - \frac{1-\alpha}{\alpha}  \big\|(\cI - \cT) \bb_1 - (\cI - \cT) \bb_2  \big\|_\cQ^2.
\ee

{\rm (ii)} If $\cQ \in \cM^+$, $0< \xi \le \min\{ \frac{1-\alpha}{\alpha}, 1\}$, then $\cT$ is $\cQ$--firmly nonexpansive.

{\rm (iii)} If $\cQ \in \cM_\cS^+$,   $ \xi \in \ ]0,  \frac{1-\alpha}{\alpha} ]$, then $\cT$ is $\cQ$--based $\beta$--cocoercive, with $\beta = \frac{1}{2}  \big( 1 + 
\frac{1}{1 - \alpha + \alpha\xi^2} \big)$.

{\rm (iv)}  $\cI - \gamma \cT  \in \cF^\cQ_{\frac{\alpha \xi}  {1-\alpha}, \gamma(1-\alpha) } $, if  $\gamma \in \ ]0, \frac{1}{1-\alpha} [$.

{\rm (v)} If  $\cQ \in \cM^+$, $\gamma \in \ ]0,   \frac{1}{ 1-\alpha } [ $,  $\xi \le \min\{ \frac{1}{\alpha \gamma} - \frac{1-\alpha}{\alpha}, \frac{1-\alpha}{\alpha} \}$,  then $\cI - \gamma \cT$ is  $\cQ$--firmly nonexpansive.

{\rm (vi)} If  $\cQ \in \cM_\cS^+$, $\gamma \in\  ]0,   \frac{1}{ 1-\alpha } [ $,  $\xi \in \ ]0,  \frac{1}{\alpha \gamma} - \frac{1-\alpha}{\alpha} ] $,  then $\cI - \gamma \cT$ is  $\cQ$--based $\beta$--cocoercive with $\beta = \frac{1} {2} (1+ \frac{1-\alpha}
{1-\alpha-\gamma (1-\alpha)^2 +\gamma \alpha^2 \xi^2})$.

{\rm (vii)} The reflected operator of $\cT$ follows $2\cT - \cI \in \cF^\cQ_{\xi, 2 \alpha} $,   if $\alpha\in \ ]0, \frac{1}{2} [$.
\end{theorem}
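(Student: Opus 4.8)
The plan is to treat (i) as the master inequality carrying all the analytic content, and then obtain (ii)--(vii) from it by elementary reparametrizations. Throughout I write $\cT=(1-\alpha)\cI+\alpha\cK$ with $\cK$ being $\cQ$-based $\xi$-Lipschitz, and abbreviate $\bd:=\bb_1-\bb_2$, $\Delta_\cT:=\cT\bb_1-\cT\bb_2$, $\Delta_\cK:=\cK\bb_1-\cK\bb_2$, so that $\Delta_\cT=(1-\alpha)\bd+\alpha\Delta_\cK$ and $(\cI-\cT)\bb_1-(\cI-\cT)\bb_2=\bd-\Delta_\cT=\alpha(\bd-\Delta_\cK)$.

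For (i), I apply Lemma \ref{l_id_1} with $\kappa=1-\alpha$ to the convex-combination expression $\Delta_\cT=(1-\alpha)\bd+\alpha\Delta_\cK$ (legitimate for arbitrary $\cQ$), giving $\|\Delta_\cT\|_\cQ^2=(1-\alpha)\|\bd\|_\cQ^2+\alpha\|\Delta_\cK\|_\cQ^2-\alpha(1-\alpha)\|\bd-\Delta_\cK\|_\cQ^2$. Substituting $\|\bd-\Delta_\cK\|_\cQ^2=\alpha^{-2}\|(\cI-\cT)\bb_1-(\cI-\cT)\bb_2\|_\cQ^2$ and bounding $\|\Delta_\cK\|_\cQ^2\le\xi^2\|\bd\|_\cQ^2$ by the Lipschitz hypothesis produces \eqref{xi_alpha} verbatim.

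Parts (ii) and (iii) are read off from \eqref{xi_alpha}. For (iii), assuming $\cQ\in\cM_\cS^+$, I first note the algebraic equivalence $\xi\le\frac{1-\alpha}{\alpha}\Leftrightarrow \alpha^2\xi^2\le(1-\alpha)^2\Leftrightarrow 1-\alpha+\alpha\xi^2\le\frac{1-\alpha}{\alpha}$; since $\cQ$ is PSD the last term in \eqref{xi_alpha} is nonnegative, so its coefficient may be weakened from $\frac{1-\alpha}{\alpha}$ down to $c:=1-\alpha+\alpha\xi^2$, yielding $\|\Delta_\cT\|_\cQ^2\le c\big(\|\bd\|_\cQ^2-\|(\cI-\cT)\bb_1-(\cI-\cT)\bb_2\|_\cQ^2\big)$. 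Feeding this into the symmetric polarization identity $\langle\bd|\Delta_\cT\rangle_\cQ=\tfrac12\big(\|\bd\|_\cQ^2+\|\Delta_\cT\|_\cQ^2-\|(\cI-\cT)\bb_1-(\cI-\cT)\bb_2\|_\cQ^2\big)$ (valid for $\cQ\in\cM_\cS$) and rearranging gives exactly $\beta$-cocoercivity with $\beta=\tfrac12(1+\tfrac1c)$. For (ii), I instead expand the sum $\|\Delta_\cT\|_\cQ^2+\|(\cI-\cT)\bb_1-(\cI-\cT)\bb_2\|_\cQ^2$ via Lemma \ref{l_id_1} to $(1-\alpha)\|\bd\|_\cQ^2+\alpha\|\Delta_\cK\|_\cQ^2+\alpha(2\alpha-1)\|\bd-\Delta_\cK\|_\cQ^2$, so firm nonexpansiveness reduces to $\|\Delta_\cK\|_\cQ^2+(2\alpha-1)\|\bd-\Delta_\cK\|_\cQ^2\le\|\bd\|_\cQ^2$. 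When $\alpha\le\tfrac12$ the middle coefficient is $\le0$ and the term is dropped (PSD), while $\xi\le1$ gives $\|\Delta_\cK\|_\cQ^2\le\|\bd\|_\cQ^2$; when $\alpha>\tfrac12$, so the threshold is $\frac{1-\alpha}{\alpha}$, I bound $\|\bd-\Delta_\cK\|_\cQ\le\|\bd\|_\cQ+\|\Delta_\cK\|_\cQ\le(1+\xi)\|\bd\|_\cQ$ by the triangle inequality.

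Parts (iv)--(vii) are reparametrizations. In (iv) the identity $\cI-\gamma\cT=\big(1-\gamma(1-\alpha)\big)\cI+\gamma(1-\alpha)\cdot\big(-\tfrac{\alpha}{1-\alpha}\cK\big)$ exhibits $\cK':=-\tfrac{\alpha}{1-\alpha}\cK$ as $\cQ$-based $\tfrac{\alpha\xi}{1-\alpha}$-Lipschitz with averaging parameter $\gamma(1-\alpha)\in\,]0,1[$, giving $\cI-\gamma\cT\in\cF^\cQ_{\frac{\alpha\xi}{1-\alpha},\gamma(1-\alpha)}$. Then (v) and (vi) simply invoke (ii) and (iii) on $\cI-\gamma\cT$ with $(\xi',\alpha')=(\tfrac{\alpha\xi}{1-\alpha},\gamma(1-\alpha))$: the two thresholds appearing in the stated $\xi$-ranges are precisely $\xi'\le1$ and $\xi'\le\frac{1-\alpha'}{\alpha'}$ rewritten in $\xi$, and substituting $(\xi',\alpha')$ into $\beta=\tfrac12(1+\tfrac1{1-\alpha'+\alpha'\xi'^2})$ and clearing the denominator by $(1-\alpha)$ reproduces the displayed $\beta$ of (vi). Finally (vii) is immediate from $2\cT-\cI=(1-2\alpha)\cI+2\alpha\cK$ with the same $\xi$-Lipschitz $\cK$, which lies in $\cF^\cQ_{\xi,2\alpha}$ exactly when $2\alpha\in\,]0,1[$.

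The \emph{main obstacle} is the regime $\alpha>\tfrac12$ of (ii): here one cannot discard the negative term in \eqref{xi_alpha}, and must instead control $\|\bd-\Delta_\cK\|_\cQ$ through the triangle inequality and verify the sharp cancellation $\xi^2+(2\alpha-1)(1+\xi)^2\le1$ on $]0,\tfrac{1-\alpha}{\alpha}]$ (equality at the endpoint, where $(1+\xi)^2=\alpha^{-2}$; the left side is increasing in $\xi$). This is the only place where PSD-ness of $\cQ$ is genuinely used, since it is what makes $\|\cdot\|_\cQ$ a bona fide seminorm obeying the triangle inequality. Everything else is bookkeeping, the only real risk being to keep the Lipschitz and averaging parameters consistent when chaining (iv) into (ii) and (iii).
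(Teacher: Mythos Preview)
Your proof is correct, and in several places it is cleaner and more direct than the paper's. The main differences are worth noting.

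For (iv) and (vii) you prove membership in $\cF^\cQ_{\xi',\alpha'}$ by directly exhibiting the averaged decomposition (e.g.\ $\cI-\gamma\cT=(1-\gamma(1-\alpha))\cI+\gamma(1-\alpha)\cK'$ with $\cK'=-\tfrac{\alpha}{1-\alpha}\cK$), which is exactly the definition. The paper instead derives an inequality of the form \eqref{xi_alpha} for the new operator and then reads off $(\xi',\alpha')$ by matching coefficients; this is more computational and, strictly speaking, relies on (i) being an equivalence (which it is, since the chain of equalities in its proof is reversible, though the paper never states this). Your route avoids that subtlety entirely.

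For (iii) your argument is tighter: you weaken the coefficient $\tfrac{1-\alpha}{\alpha}$ to $c=1-\alpha+\alpha\xi^2$ in one step (using $\cQ\in\cM_\cS^+$ and $\xi\le\tfrac{1-\alpha}{\alpha}$), then polarize once. The paper runs two separate expansions yielding two candidate $\beta$'s, \eqref{z1} and \eqref{z2}, and then compares them to select the larger one; the end result is the same $\beta$.

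For (ii) the two arguments genuinely diverge in the regime $\alpha>\tfrac12$. The paper rescales \eqref{xi_alpha} by $\tfrac{\alpha}{1-\alpha}$ to flip the two coefficient constraints, whereas you keep the $\cK$-formulation, bound $\|\bd-\Delta_\cK\|_\cQ\le(1+\xi)\|\bd\|_\cQ$ via the triangle inequality, and verify the scalar inequality $\xi^2+(2\alpha-1)(1+\xi)^2\le 1$ (with equality at the endpoint). Both are valid; your use of the triangle inequality is legitimate because $\cQ\in\cM^+$ forces $\|\cdot\|_\cQ^2=\|\cdot\|_{(\cQ+\cQ^\top)/2}^2$ to be a genuine seminorm. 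The paper's rescaling trick is a touch slicker here, but your approach is self-contained and makes the role of PSD-ness explicit, as you correctly flag in your closing remark.
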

\begin{proof}
(i) By Definition \ref{def_lip}, there exists a $\cQ$--based $\xi$--Lipschitz continuous operator $\cK: D \mapsto \cH$, such that $\cT = (1-\alpha) \cI + \alpha \cK$, and thus, $\cK = \frac{1}{\alpha} \cT + (1-\frac{1}{\alpha}) \cI$. By Lemma \ref{l_id_1},  we have:
\begin{eqnarray}
&& \big\|\cK \bb_1 - \cK \bb_2\big\|_\cQ^2 
\nonumber\\
& = & (1-\frac{1}{\alpha}) \big\| \bb_1 -  \bb_2\big\|_\cQ^2 
+ \frac{1}{\alpha} \big\|\cT \bb_1 - \cT \bb_2\big\|_\cQ^2 
+ \frac{1-\alpha}{\alpha^2} 
\big\| (\cI-\cT) \bb_1 - (\cI - \cT) \bb_2\big\|_\cQ^2 
\nonumber\\
& \le & \xi^2 \big\| \bb_1 -  \bb_2\big\|_\cQ^2,
\quad \text{(by Lipschitz continuity of $\cK$)}
\nonumber 
\end{eqnarray}
which yields the desired inequality, after simple rearrangements.

\vskip.2cm
(ii) If $\cQ \in \cM^+$, to ensure that $\cT$ is $\cQ$--firmly nonexpansive, we need to let  $1-\alpha +\alpha\xi^2 \le 1$ and $\frac{1-\alpha}{\alpha} \ge 1$,   by \eqref{xi_alpha} and Definition \ref{def_nonexpansive}--(iv). It 
 yields $\xi \in \ ]0,1]$ and $\alpha \in \ ]0, \frac{1}{2}]$.

On the other hand, rewrite \eqref{xi_alpha} as:
\be \label{y11}
\frac {\alpha} {1-\alpha} \big\| \cT\bb_1-\cT\bb_2 \big\|_\cQ^2 \le   \frac {\alpha} {1-\alpha} 
(1-\alpha + \alpha \xi^2) \big\| \bb_1 - \bb_2  \big\|_\cQ^2
 -  \big\|(\cI - \cT) \bb_1 - (\cI - \cT) \bb_2  \big\|_\cQ^2. 
\ee
The firm nonexpansiveness of $\cT$ requires
 $\frac {\alpha} {1-\alpha}  \ge 1$  and $ \frac {\alpha} {1-\alpha}  (1-\alpha + \alpha \xi^2) \le 1$, i.e. $ \xi \in\  ]0,  \frac{1-\alpha}{\alpha} ] $ and $\alpha \in [ \frac{1}{2}, 1[$.
Finally, combining both conditions yields $0 < \xi \le  \min\{ \frac{1-\alpha}{\alpha}, 1\}$.

\vskip.2cm
(iii) If $\cQ \in \cM_\cS$,  expanding $\|(\cI- \cT) \bb_1 - (\cI - \cT)\bb_2 \|_\cQ^2$,  \eqref{xi_alpha} is equivalent to:
\[
\frac{2(1-\alpha)}{\alpha}   \big\langle  \cQ(\bb_1 - \bb_2) \big| 
 \cT \bb_1 -   \cT  \bb_2  \big\rangle   \ge 
\frac{1}{\alpha} \big\| \cT\bb_1-\cT\bb_2 \big\|_\cQ^2
- (2 - \alpha  -\frac{1}{\alpha} + \alpha \xi^2) \big\| \bb_1 - \bb_2  \big\|_\cQ^2,
\] 
which yields:
\be \label{z1}
 \big\langle \cQ (\bb_1 - \bb_2) \big| 
 \cT \bb_1 -   \cT  \bb_2  \big\rangle   \ge 
\frac {1} {2(1-\alpha)}  \big\| \cT\bb_1-\cT\bb_2 \big\|_\cQ^2,
\ee 
if $\cQ \in \cM^+$ and  $ 2 - \alpha  -\frac{1}{\alpha} + \alpha \xi^2 \le 0 $, i.e. $\xi \le \frac{1-\alpha}{\alpha}$.

On the other hand,  if $\cQ \in \cM_\cS^+$,   \eqref{y11} becomes:
\begin{eqnarray}
&& \frac {\alpha} {1-\alpha} \big\| \cT\bb_1-\cT\bb_2 \big\|_\cQ^2 
\nonumber \\
 & \le &  \frac {\alpha (1-\alpha + \alpha \xi^2)} {1-\alpha} 
 \big\| \bb_1 - \bb_2  \big\|_\cQ^2
 -  \big\|(\cI - \cT) \bb_1 - (\cI - \cT) \bb_2  \big\|_\cQ^2 
\nonumber \\
 & = &  \frac {\alpha (1-\alpha + \alpha \xi^2)} {1-\alpha} 
 \big\|(\cI - \cT) \bb_1 - (\cI - \cT) \bb_2
 + \cT \bb_1 - \cT \bb_2  \big\|_\cQ^2
 -  \big\|(\cI - \cT) \bb_1 - (\cI - \cT) \bb_2  \big\|_\cQ^2 
\nonumber \\
 & = & \Big( \frac {\alpha (1-\alpha + \alpha \xi^2)} {1-\alpha} -1 \Big)
 \big\|(\cI - \cT) \bb_1 - (\cI - \cT) \bb_2 \big\|_\cQ^2
-   \frac {\alpha (1-\alpha + \alpha \xi^2)} {1-\alpha} 
 \big\|  \cT  \bb_1 -    \cT  \bb_2 \big\|_\cQ^2
\nonumber \\
& +  & 2 \frac {\alpha (1-\alpha + \alpha \xi^2)} {1-\alpha}
\big\langle \cQ (\bb_1 -  \bb_2) \big| 
 \cT \bb_1 - \cT \bb_2  \big\rangle.
 \nonumber 
\end{eqnarray}
If $ \frac {\alpha (1-\alpha + \alpha \xi^2)} {1-\alpha}  \le 1$, i.e. $\xi \le \frac{1-\alpha}{\alpha}$, it   yields:
\[
2 \frac {\alpha (1-\alpha + \alpha \xi^2)} {1-\alpha}
\big\langle \cQ (\bb_1 -  \bb_2) \big| 
 \cT \bb_1 - \cT \bb_2  \big\rangle  \ge 
 \Big( \frac {\alpha} {1-\alpha}
 + \frac {\alpha (1-\alpha + \alpha \xi^2)} {1-\alpha} \Big)
  \big\| \cT\bb_1-\cT\bb_2 \big\|_\cQ^2,
\]
i.e.
\be \label{z2}
\big\langle \cQ (\bb_1 -  \bb_2) \big| 
 \cT \bb_1 - \cT \bb_2  \big\rangle  \ge 
\frac{1}{2} \Big(1 + \frac{1}{1 - \alpha + \alpha\xi^2} \Big) 
  \big\| \cT\bb_1-\cT\bb_2 \big\|_\cQ^2 .
\ee

Finally, (iii) follows by comparing  \eqref{z1} with \eqref{z2}, and noting that: 
$\frac{1}{1-\alpha} \le 1 + 
\frac{1}{1 - \alpha + \alpha\xi^2}$, if $\xi \le \frac{1-\alpha}{\alpha}$.

\vskip.2cm
(iv) Expanding  $\|(\cI- \cT) \bb_1 - (\cI - \cT)\bb_2 \|_\cQ^2$,  \eqref{xi_alpha} is equivalent to:
\[
 \big\langle (\cQ+\cQ^*) ( \bb_1- \bb_2) \big|  \cT \bb_1   - \cT  \bb_2  \big\rangle
 \ge  \frac{1}{ 1-\alpha } \big\| \cT\bb_1-\cT\bb_2 \big\|_\cQ^2  - \frac{ \alpha^2 \xi^2 - (1 - \alpha)^2  }
{ 1-\alpha }  \big\| \bb_1 - \bb_2  \big\|_\cQ^2.
\]
Then, we have:
\begin{eqnarray}
&& \big\|(\cI - \gamma \cT) \bb_1 - (\cI - \gamma \cT) \bb_2\big\|_\cQ^2 
\nonumber\\
&=&    \big\|  \bb_1 -  \bb_2 \big\|_\cQ^2
-  \gamma \big\langle (\cQ+\cQ^*) (\bb_1 - \bb_2) \big| 
\cT \bb_1 - \cT \bb_2 \big\rangle  
+ \big\| \gamma  \cT   \bb_1 - \gamma   \cT  \bb_2 \big\|_\cQ^2
\nonumber\\
& \le &    \big\|  \bb_1 -  \bb_2 \big\|_\cQ^2
- \gamma  \frac{1}{1-\alpha } \big\| \cT\bb_1-\cT\bb_2 \big\|_\cQ^2  +  \gamma  \frac{ \alpha^2 \xi^2 - (1 - \alpha)^2  }
{ 1-\alpha }  \big\| \bb_1 - \bb_2  \big\|_\cQ^2 
+ \big\| \gamma  \cT   \bb_1 - \gamma   \cT  \bb_2 \big\|_\cQ^2
\nonumber\\
&=&  \Big(  1+ \gamma 
 \frac{ \alpha^2 \xi^2 - (1 - \alpha)^2  }  { 1-\alpha } \Big)
  \big\|  \bb_1 -  \bb_2 \big\|_\cQ^2
- \Big(  \frac{1}{\gamma (1-\alpha)} -1\Big)
 \big\|\gamma \cT\bb_1 - \gamma \cT\bb_2 \big\|_\cQ^2.   
\nonumber 
\end{eqnarray}
Let  $\cI - \gamma \cT \in \cF^\cQ_{\xi', \alpha'}$, then, by  \eqref{xi_alpha}, we have $\frac{1-\alpha'}{\alpha'} =  \frac{1}{\gamma (1-\alpha)} -1 $ and $1-\alpha'+\alpha' \xi'^2 = 1+ \gamma 
 \frac{ \alpha^2 \xi^2 - (1 - \alpha)^2  }  { 1-\alpha }$, which yields $\alpha' = \gamma(1-\alpha)$ and $\xi' = \frac{\alpha \xi}  {1-\alpha} $.
 
\vskip.2cm
(v) Theorem \ref{t_lip}--(ii) and (iv).

\vskip.2cm
(vi) Theorem \ref{t_lip}--(iii) and (iv).

\vskip.2cm
(vii)  We deduce that:
\begin{eqnarray}
&& \big\|(2\cT - \cI) \bb_1 - (2\cT - \cI) \bb_2\big\|_\cQ^2 
\nonumber\\
&=&   2  \big\|\cT  \bb_1 - \cT \bb_2 \big\|_\cQ^2
-\big\| \bb_1 -\bb_2 \big\|_\cQ^2 
+2 \big\|(\cI-\cT)  \bb_1 - (\cI-\cT) \bb_2 \big\|_\cQ^2
\ \text{(by Lemma \ref{l_id_1})}
\nonumber\\
& \le & 2(1-\alpha + \alpha \xi^2) \big\| \bb_1 -  \bb_2\big\|_\cQ^2 
- \frac{2(1-\alpha)}{\alpha} \big\| (\cI-\cT) \bb_1 - (\cI - \cT) \bb_2\big\|_\cQ^2 
\nonumber\\
& - & \big\| \bb_1 -\bb_2 \big\|_\cQ^2 
+2 \big\|(\cI-\cT)  \bb_1 - (\cI-\cT) \bb_2 \big\|_\cQ^2
\quad  \text{(by \eqref{xi_alpha} )}
\nonumber\\
& = & (1-2\alpha + 2\alpha \xi^2) \big\| \bb_1 -  \bb_2\big\|_\cQ^2 
- \frac{1-2\alpha} {2\alpha} \big\| (2\cI-2\cT) \bb_1 - 
(2\cI - 2\cT) \bb_2\big\|_\cQ^2.
\nonumber 
\end{eqnarray}
Let $2\cT - \cI \in \cF^\cQ_{\xi',  \alpha'}$. Thus, we have $\frac{1-\alpha'}{\alpha'} = \frac{1-2\alpha}{2\alpha}  $ and $1-\alpha'+\alpha' \xi'^2 = 1-2\alpha + 2\alpha \xi^2$,  i.e. $\alpha' =2  \alpha $, and $\xi' =\xi$. \hfill 
\end{proof}

Two corollaries follow from Theorem \ref{t_lip}.
\begin{corollary} \label{c_lip_1} 
{\rm [Further results of Theorem \ref{t_lip}--(iii)]}
Let  $\cT \in \cF^\cQ_{\xi,\alpha}$ with  $\xi \in \  ]0,+\infty[$ and $\alpha\in \ ]0,1[$, then, the following hold.

{\rm (i)} If $\xi \le \min\{\frac{1-\alpha}{\alpha}, 1\}$, then, $\cT$ is $\cQ$--based $\beta$--cocoercive with $\beta \in [1, +\infty [ $,  strongly $\alpha$--averaged, and $\cQ$--firmly nonexpansive.

{\rm (ii)} If $\alpha \in \ ]0, \frac{1}{2} [ $, $\xi \in\  ]1, \frac{1-\alpha}{\alpha} ]$, then, $\cT$ is $\cQ$--based $\beta$--cocoercive with $\beta \in \ ]0, 1[ $, and weakly  $\alpha$--averaged.
\end{corollary}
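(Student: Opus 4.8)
The plan is to read off both parts directly from Theorem~\ref{t_lip} together with Definition~\ref{def_lip}, since the hypotheses of the corollary are tailored precisely to make the earlier machinery applicable; no new estimate is required, only a sign analysis of the cocoercivity constant. Throughout I would carry the ambient hypothesis $\cQ \in \cM_\cS^+$ of Theorem~\ref{t_lip}--(iii), which the corollary inherits as a ``further result'' of that item.

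First I would settle the cocoercivity claim common to both parts. In (i) the assumption $\xi \le \min\{\frac{1-\alpha}{\alpha},1\}$ forces $\xi \le \frac{1-\alpha}{\alpha}$, and in (ii) this bound is assumed outright; in either regime Theorem~\ref{t_lip}--(iii) applies and yields that $\cT$ is $\cQ$--based $\beta$--cocoercive with $\beta = \frac{1}{2}\big(1 + \frac{1}{1-\alpha+\alpha\xi^2}\big)$. The entire distinction between the two cases then reduces to the monotonicity of $\beta$ in $\xi$: since $1-\alpha+\alpha\xi^2 \le 1 \iff \xi \le 1$, I would observe that $\beta \ge 1 \iff \xi \le 1$ and $\beta < 1 \iff \xi > 1$. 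Hence in (i), where $\xi \le 1$, we obtain $\beta \in [1,+\infty[$, while in (ii), where $\xi > 1$, we obtain $\beta \in \ ]0,1[$, the lower bound being immediate because $\beta > \frac{1}{2}$ always.

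Next I would dispatch the averagedness labels straight from Definition~\ref{def_lip}: as $\cT \in \cF^\cQ_{\xi,\alpha}$, the case $\xi \in \ ]0,1]$ in (i) makes $\cT$ $\cQ$--strongly averaged, and the case $\xi \in \ ]1,\frac{1-\alpha}{\alpha}]$ in (ii) makes it $\cQ$--weakly averaged. For the remaining firm nonexpansiveness asserted in (i), I would simply invoke Theorem~\ref{t_lip}--(ii), whose hypothesis $0 < \xi \le \min\{\frac{1-\alpha}{\alpha},1\}$ is exactly the standing assumption of (i), and whose requirement $\cQ \in \cM^+$ is met since $\cM_\cS^+ \subset \cM^+$.

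Finally, I would record the one structural point worth stating explicitly: in (ii) the condition $\alpha \in \ ]0,\frac{1}{2}[$ is not an extra hypothesis to be exploited but rather the nonemptiness condition for the interval $\ ]1,\frac{1-\alpha}{\alpha}]$, since $\frac{1-\alpha}{\alpha} > 1 \iff \alpha < \frac{1}{2}$; noting this keeps the weakly--averaged regime from being vacuous. There is no genuine obstacle here---the proof is a packaging of already-established facts---and the only thing to watch carefully is the bookkeeping of the inequality direction $\xi \lessgtr 1$, so that the cocoercivity constant $\beta$ is placed in the correct asserted range in each part.
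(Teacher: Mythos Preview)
Your proposal is correct and follows the same approach as the paper: invoke Theorem~\ref{t_lip}--(iii) to obtain the explicit cocoercivity constant $\beta = \frac{1}{2}\big(1+\frac{1}{1-\alpha+\alpha\xi^2}\big)$ and then compare $\beta$ with $1$ via the equivalence $\beta\ge 1 \iff \xi\le 1$. In fact your write-up is more complete than the paper's one-line proof, since you also explicitly dispatch the strongly/weakly averaged labels via Definition~\ref{def_lip} and the firm nonexpansiveness via Theorem~\ref{t_lip}--(ii), and you clarify why $\alpha<\tfrac{1}{2}$ is needed in part~(ii).
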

\begin{proof}
By Theorem \ref{t_lip}-(iii), $\cT$ is  $\beta$-cocoercive, with $\beta = \frac{1}{2}  \big( 1 + 
\frac{1}{1 - \alpha + \alpha\xi^2} \big)$. The proof is completed by comparing $\beta$ with 1.
\hfill 
\end{proof}

\begin{corollary} \label{c_lip_2}
{\rm [Further results of Theorem \ref{t_lip}--(vi)]} 
Let  $\cT \in \cF^\cQ_{\xi,\alpha}$ with  $\xi \in ]0,+\infty[$ and $\alpha\in \ ]0,1[$. If $\gamma \in \ ]0, \frac{1}{1-\alpha} [$,  then, the following hold.

{\rm (i)} If $\xi \le \min\{\frac{1-\alpha}{\alpha}, 
\frac{1}{\gamma \alpha} - \frac{1-\alpha}{\alpha} \}$, then, 
$\cI - \gamma \cT$ is $\cQ$--based $\beta$--cocoercive with $\beta \in [ 1, +\infty [ $,  strongly $\alpha$--averaged, and $\cQ$--firmly nonexpansive.

{\rm (ii)} If $\gamma \in \ ]0, \frac{1}{2(1-\alpha)} [ $, $\xi \in\  ] \frac{1-\alpha}{\alpha}, \frac{1}{\gamma \alpha} - \frac{1-\alpha}{\alpha} ]$, then, $\cI -\gamma \cT$ is $\cQ$--based $\beta$--cocoercive with $\beta \in \  ]0, 1[ $, and weakly  $\alpha$--averaged.
\end{corollary}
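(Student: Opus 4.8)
The plan is to mirror the structure of the proof of Corollary \ref{c_lip_1}, treating this result as a bookkeeping consequence of the parts of Theorem \ref{t_lip} that already describe $\cI - \gamma \cT$. The common ingredient is Theorem \ref{t_lip}--(iv), which, under the standing assumption $\gamma \in \ ]0, \frac{1}{1-\alpha}[$, places $\cI - \gamma \cT$ in the family $\cF^\cQ_{\xi',\alpha'}$ with $\xi' = \frac{\alpha \xi}{1-\alpha}$ and $\alpha' = \gamma(1-\alpha)$. From this the averagedness type is read off directly: by Definition \ref{def_lip}, $\cI - \gamma \cT$ is $\cQ$--strongly averaged precisely when $\xi' \le 1$ and $\cQ$--weakly averaged when $\xi' > 1$; since $\xi' \le 1 \Leftrightarrow \xi \le \frac{1-\alpha}{\alpha}$, the hypothesis $\xi \le \frac{1-\alpha}{\alpha}$ of part (i) yields strong averagedness, while the hypothesis $\xi > \frac{1-\alpha}{\alpha}$ of part (ii) yields weak averagedness.

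For the cocoercivity claims I would invoke Theorem \ref{t_lip}--(vi), whose hypotheses (in particular $\cQ \in \cM_\cS^+$ and $\xi \le \frac{1}{\alpha\gamma} - \frac{1-\alpha}{\alpha}$) are exactly the upper part of the ranges in both (i) and (ii). This gives $\cQ$--based $\beta$--cocoercivity with $\beta = \frac{1}{2}\big(1 + \frac{1-\alpha}{D}\big)$, where $D := 1-\alpha - \gamma(1-\alpha)^2 + \gamma\alpha^2\xi^2$. The remaining work is to compare $\beta$ with $1$, exactly as in the last line of the proof of Corollary \ref{c_lip_1}. Writing $D = (1-\alpha)\big(1 - \gamma(1-\alpha)\big) + \gamma\alpha^2\xi^2$ and using $\gamma(1-\alpha) < 1$ shows $D > 0$ on the whole admissible range, whence $\beta \ge 1 \Leftrightarrow 1-\alpha \ge D \Leftrightarrow \gamma\alpha^2\xi^2 \le \gamma(1-\alpha)^2 \Leftrightarrow \xi \le \frac{1-\alpha}{\alpha}$. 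Therefore $\beta \in [1,+\infty[$ in case (i) and $\beta \in \ ]\tfrac{1}{2},1[ \ \subset \ ]0,1[$ in case (ii).

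The firm nonexpansiveness asserted in part (i) is immediate from Theorem \ref{t_lip}--(v) (which carries the metric requirement $\cQ \in \cM^+$), since its condition $\xi \le \min\{\frac{1}{\alpha\gamma} - \frac{1-\alpha}{\alpha}, \frac{1-\alpha}{\alpha}\}$ coincides verbatim with the hypothesis of (i). I would also record why the extra restriction $\gamma \in \ ]0, \frac{1}{2(1-\alpha)}[$ is needed only in (ii): it is precisely the condition that makes the interval $]\frac{1-\alpha}{\alpha}, \frac{1}{\gamma\alpha} - \frac{1-\alpha}{\alpha}]$ nonempty, because $\frac{1-\alpha}{\alpha} < \frac{1}{\gamma\alpha} - \frac{1-\alpha}{\alpha} \Leftrightarrow \gamma < \frac{1}{2(1-\alpha)}$.

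The only genuine step — and hence the \emph{main obstacle}, though a mild one — is the comparison of $\beta$ with $1$ together with the verification that $D > 0$ over the entire admissible parameter range; everything else reduces to checking that the case assumptions align with the hypotheses of the three cited parts of Theorem \ref{t_lip}. No new estimates are required beyond those already established, so I expect the proof to be short.
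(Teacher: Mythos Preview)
Your proposal is correct and follows essentially the same approach as the paper, whose entire proof is the single line ``The proof is completed by comparing $\beta$ in Theorem \ref{t_lip}--(vi) with 1.'' You have simply made explicit the details the paper leaves implicit: the averagedness type via Theorem \ref{t_lip}--(iv), the firm nonexpansiveness via Theorem \ref{t_lip}--(v), the positivity of $D$, and the reason the extra restriction on $\gamma$ appears only in part (ii).
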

\begin{proof}
The proof is completed by comparing $\beta$ in Theorem \ref{t_lip}--(vi)  with 1. \hfill 
\end{proof}

\begin{lemma} \label{l_cocoercive}
Let the operator $\cT: D \mapsto \cH$ be $\cQ$--based $\beta$--cocoercive with $\cQ \in \cM_\cS$ and $\beta \in \ ]\frac{1}{2}, +\infty [$. Then, the following hold.

{\rm (i)} If $\gamma \in \ ]0, 2\beta [$, then, $\cT \in \cF^\cQ_{\frac{1}{2\beta-1}, 
1-\frac{1}{2\beta} }$,  $\cI - \gamma \cT \in \cF^\cQ_{1,\frac{\gamma} {2\beta}}$.

{\rm (ii)} If $\cQ \in \cM_\cS^+$,  $\gamma \in \ ] 0,  \beta]$,  $\cI - \gamma \cT$ is $\cQ$--based 1--cocoercive (i.e. $\cQ$--partly nonexpansive).
\end{lemma}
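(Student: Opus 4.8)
The plan is to reduce all three assertions to the defining $\beta$--cocoercivity inequality $\langle \bb_1 - \bb_2 | \cT\bb_1 - \cT\bb_2\rangle_\cQ \ge \beta\|\cT\bb_1 - \cT\bb_2\|_\cQ^2$. The second claim of (i), namely $\cI - \gamma\cT \in \cF^\cQ_{1, \frac{\gamma}{2\beta}}$, requires no new work: it is exactly Lemma \ref{l_average}--(vi) read under the present hypotheses ($\cQ \in \cM_\cS$, $\gamma \in \ ]0,2\beta[$). The remaining two statements I would prove by direct algebraic verification, using the fact—implicit in the proof of Theorem \ref{t_lip}--(i)—that the inequality \eqref{xi_alpha} is in fact \emph{equivalent} to membership in $\cF^\cQ_{\xi,\alpha}$: for fixed $\alpha$ the operator $\cK = \frac{1}{\alpha}\cT + (1-\frac{1}{\alpha})\cI$ is forced, and the identity of Lemma \ref{l_id_1} turns its $\cQ$--based $\xi$--Lipschitz continuity into \eqref{xi_alpha} with no loss in either direction.

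For the first claim of (i) I would substitute $\xi = \frac{1}{2\beta-1}$ and $\alpha = 1 - \frac{1}{2\beta}$ into \eqref{xi_alpha} and simplify the coefficients. A short computation gives $1-\alpha = \frac{1}{2\beta}$, $1-\alpha+\alpha\xi^2 = \frac{1}{2\beta-1}$ and $\frac{1-\alpha}{\alpha} = \frac{1}{2\beta-1}$, so that \eqref{xi_alpha}, after multiplication by $2\beta-1$, reads
\[
(2\beta-1)\|\cT\bb_1-\cT\bb_2\|_\cQ^2 \le \|\bb_1-\bb_2\|_\cQ^2 - \|(\cI-\cT)\bb_1 - (\cI-\cT)\bb_2\|_\cQ^2 .
\]
Expanding the last term with $\cQ \in \cM_\cS$ (so that the cross term equals $2\langle \bb_1-\bb_2 | \cT\bb_1-\cT\bb_2\rangle_\cQ$), the summands $\|\bb_1-\bb_2\|_\cQ^2$ cancel and the inequality collapses precisely to $\langle \bb_1-\bb_2 | \cT\bb_1-\cT\bb_2\rangle_\cQ \ge \beta\|\cT\bb_1-\cT\bb_2\|_\cQ^2$, the given cocoercivity. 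Reading the equivalence backwards yields $\cT \in \cF^\cQ_{\frac{1}{2\beta-1}, 1-\frac{1}{2\beta}}$; note that $\alpha \in \ ]0,1[$ whenever $\beta > \frac{1}{2}$, and the regime $\beta \lessgtr 1$ is what flips $\xi$ between the strongly and weakly averaged ranges.

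For (ii) I would write out the $\cQ$--based 1--cocoercivity of $\cI - \gamma\cT$—equivalently its $\cQ$--partial nonexpansiveness by Lemma \ref{l_part}--(i)—as
\[
\langle \bb_1-\bb_2 | (\cI-\gamma\cT)\bb_1 - (\cI-\gamma\cT)\bb_2\rangle_\cQ \ge \|(\cI-\gamma\cT)\bb_1 - (\cI-\gamma\cT)\bb_2\|_\cQ^2 .
\]
Expanding both sides with $\cQ$ symmetric, the difference of the two sides equals $\gamma\big(\langle \bb_1-\bb_2 | \cT\bb_1-\cT\bb_2\rangle_\cQ - \gamma\|\cT\bb_1-\cT\bb_2\|_\cQ^2\big)$, so (since $\gamma>0$) the claim is equivalent to $\langle \bb_1-\bb_2 | \cT\bb_1-\cT\bb_2\rangle_\cQ \ge \gamma\|\cT\bb_1-\cT\bb_2\|_\cQ^2$, which follows from cocoercivity together with $\gamma \le \beta$, provided $\|\cT\bb_1-\cT\bb_2\|_\cQ^2 \ge 0$.

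The only genuine subtlety, and the reason the hypotheses on $\cQ$ differ between (i) and (ii), is exactly this nonnegativity: chaining $\langle \cdot | \cdot\rangle_\cQ \ge \beta\|\cdot\|_\cQ^2 \ge \gamma\|\cdot\|_\cQ^2$ requires $\|\cT\bb_1-\cT\bb_2\|_\cQ^2 \ge 0$, which holds only when $\cQ$ is PSD—hence $\cQ \in \cM_\cS^+$ in (ii), whereas (i) needs mere symmetry $\cQ \in \cM_\cS$ for the cross-term expansion. Apart from this bookkeeping everything is routine coefficient matching; the one point to get right is the arithmetic showing $1-\alpha+\alpha\xi^2 = \frac{1-\alpha}{\alpha} = \frac{1}{2\beta-1}$ for the chosen parameters, which is what makes \eqref{xi_alpha} reduce cleanly to the cocoercivity estimate.
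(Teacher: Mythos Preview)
Your proof is correct. For the first assertion of (i) you and the paper carry out the same algebra in opposite directions: the paper expands $\|(\cI-\cT)\bb_1-(\cI-\cT)\bb_2\|_\cQ^2$, inserts cocoercivity to obtain the inequality \eqref{beta}, and then matches coefficients against \eqref{xi_alpha}; you instead plug the target parameters into \eqref{xi_alpha} and verify that it collapses to the cocoercivity inequality. Either way the crux is the arithmetic $1-\alpha+\alpha\xi^2=\frac{1-\alpha}{\alpha}=\frac{1}{2\beta-1}$, together with the (correctly noted) fact that Theorem~\ref{t_lip}--(i) is an equivalence.

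The routes diverge for the other two claims. For $\cI-\gamma\cT\in\cF^\cQ_{1,\gamma/(2\beta)}$ the paper applies Theorem~\ref{t_lip}--(iv) to the just-established $\cT\in\cF^\cQ_{\frac{1}{2\beta-1},\,1-\frac{1}{2\beta}}$ (one checks $\frac{\alpha\xi}{1-\alpha}=1$ and $\gamma(1-\alpha)=\frac{\gamma}{2\beta}$), whereas you cite Lemma~\ref{l_average}--(vi) directly from the cocoercivity hypothesis; your route is shorter and does not depend on the first part of (i). For (ii) the paper again chains through (i) and Theorem~\ref{t_lip}--(iii) (with $\xi=1$, $\alpha=\frac{\gamma}{2\beta}$, the condition $\xi\le\frac{1-\alpha}{\alpha}$ becomes $\gamma\le\beta$, and the resulting cocoercivity constant is exactly $1$), while your direct expansion is self-contained and makes transparent that PSD-ness of $\cQ$ enters precisely at the step $\beta\|\cdot\|_\cQ^2\ge\gamma\|\cdot\|_\cQ^2$. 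Both approaches are valid; yours is more elementary, the paper's more systematic in routing everything through the $\cF^\cQ_{\xi,\alpha}$ calculus.
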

\begin{proof}
(i)  If $\cQ \in \cM_\cS$, we have:
\begin{eqnarray}
&& \big\|(\cI -  \cT) \bb_1 - (\cI - \cT) \bb_2\big\|_\cQ^2 
\nonumber\\
&=&    \big\|  \bb_1 -  \bb_2 \big\|_\cQ^2
- 2 \big\langle \cQ(\bb_1 - \bb_2) \big| 
\cT \bb_1 - \cT \bb_2 \big\rangle  
+ \big\|  \cT   \bb_1 -  \cT  \bb_2 \big\|_\cQ^2
\nonumber\\
& \le & \big\|  \bb_1 -  \bb_2 \big\|_\cQ^2
- 2\beta \big\| \cT  \bb_1 - \cT \bb_2 \big\|_\cQ^2 
+ \big\|  \cT   \bb_1 -  \cT  \bb_2 \big\|_\cQ^2
\nonumber\\
&=&  \big\|  \bb_1 -  \bb_2 \big\|_\cQ^2
- ( 2\beta -1) \big\| \cT  \bb_1 - \cT \bb_2 \big\|_\cQ^2,
\nonumber 
\end{eqnarray}
which yields:
\[
\big\| \cT  \bb_1 - \cT \bb_2 \big\|_\cQ^2 
\le \frac{1}{2\beta -1}  \big\|  \bb_1 -  \bb_2 \big\|_\cQ^2
- \frac{1}{2\beta -1} 
\big\|(\cI -  \cT) \bb_1 - (\cI - \cT) \bb_2\big\|_\cQ^2.
\]
Thus,  if $\cT \in \cF^\cQ_{\xi', \alpha'}$,  by \eqref{xi_alpha},  we have: $\frac{1-\alpha'}{\alpha'} = \frac{1}{2\beta-1}$  and $1-\alpha'+\alpha' \xi'^2 = \frac{1}{2\beta-1}$, i.e. 
$\alpha'=1-\frac{1}{2\beta}$ and $\xi'=\frac{1}{2\beta-1}$. 

$\cI - \gamma \cT$ follows from Theorem \ref{t_lip}-(iv).

\vskip.2cm
(ii) Theorem \ref{t_lip}-(iii) and Lemma \ref{l_cocoercive}-(i).
\hfill 
\end{proof}

\vskip.2cm
Part of the results in Theorem \ref{t_lip}, Corollary \ref{c_lip_1} and Corollary \ref{c_lip_2} is summarized in Fig.1, where FNE stands for `$\cQ$--firmly nonexpansive'.  We can see that  
 $\cT \in \cF^\cQ_{\xi,\alpha}$ could be $\cQ$--firmly nonexpansive for $\alpha> 1/2$, at the expense of stricter condition on the Lipschitz constant $\xi \le \frac{1-\alpha}{\alpha} <1 $.
\begin{figure} [H]
\hspace*{-.3cm}
\scalebox{0.7} {
\begin{tikzpicture}[scale=0.97]
%[>=latex]
\draw node[below] at (0,0) {\large $\cT \in \cF^\cQ_{\xi,\alpha}$};
\draw [line width=.8pt]  (1, -.4) -- (3, -.4);
\draw node[below] at (2, 0.3) { $\xi \le \frac{1-\alpha}{\alpha}$};
\draw [line width=.8pt]  (3, .4) -- (3, -1.3);
\draw [->, line width=.8pt]  (3, .4) -- (4.2, .4);
\draw node[below] at (3.5, 1.1) { $\alpha \ge \frac{1}{2}$};
\draw node[below] at (9.2, .7) {\large $\beta$--cocoercive with $\beta \ge 1$,  FNE, strongly $\alpha$--averaged};

\draw [->, line width=.8pt]  (3, -1.3) -- (4.2, -1.3);
\draw node[below] at (3.5, -1.3) { $\alpha < \frac{1}{2}$};
\draw node[below] at (5.5, -1) {\large $\beta$--cocoercive};
\draw [line width=.8pt]  (6.8, -1.3) -- (7.5, -1.3);
\draw [line width=.8pt]  (7.5, -2) -- (7.5, -.5);
\draw [->, line width=.8pt]  (7.5, -.5) -- (9.1, -.5);
\draw node[below] at (8.3, -.5) { $\xi \le  1$};
\draw node[below] at (12.3, -.2) {\large $\beta \ge 1$, FNE, strongly $\alpha$--averaged};
\draw [->, line width=.8pt]  (7.5, -2) -- (9.1, -2);
\draw node[below] at (8.3, -1.4) { $\xi > 1$};
\draw node[below] at (12.7, -1.6) {\large $\beta < 1$, non--FNE, weakly $\alpha$--averaged};

%%%%%%%%%%%%%%%%%%%%%%%%%
\draw [->, line width=.8pt]  (0, -0.6) -- (0, -4.5);
\draw node[below] at (-.3, -4.6) {\large $\cI - \gamma \cT \in \cF^\cQ_{\frac{\alpha \xi} {1-\alpha}, \gamma(1-\alpha)}$};
\draw [line width=.8pt]  (1, -5) -- (3, -5);
\draw node[below] at (1.7, -4.3) { $\xi \le \frac{1}{\gamma \alpha} - \frac{1-\alpha} {\alpha}$};

\draw [line width=.8pt]  (3, -4) -- (3, -6);
\draw [->, line width=.8pt]  (3, -4) -- (5, -4);
\draw node[below] at (4, -3.3) { $\gamma < \frac{1}{2(1-\alpha)}$};
\draw node[below] at (6.3, -3.7) {\large  $\beta$--cocoercive};
\draw [ line width=.8pt]  (7.6, -4) -- (8.2, -4);
\draw [line width=.8pt]  (8.2, -5) -- (8.2, -3);
\draw [->, line width=.8pt]  (8.2, -3) -- (9.8, -3);
\draw node[below] at (9, -3) {\large $\xi \le \frac{1-\alpha}{ \alpha }$};
\draw node[below] at (13, -2.7) {\large  $\beta \ge 1$, FNE, strongly $\alpha$--averaged};

\draw [->, line width=.8pt]  (8.2, -5) -- (9.8, -5);
\draw node[below] at (9, -4.3) { $  \xi > \frac{1-\alpha}{ \alpha }$};
\draw node[below] at (13.25, -4.7) {\large  $\beta < 1$, non--FNE, weakly $\alpha$--averaged};

\draw [->, line width=.8pt]  (3, -6) -- (5, -6);
\draw node[below] at (4, -5.3) { $\gamma \ge  \frac{1}{2(1-\alpha)} $};

\draw node[below] at (10, -5.7) {\large $\beta$--cocoercive with $\beta \ge 1$, FNE, strongly $\alpha$--averaged};

%%%%%%%%%%%%%%%%%%%%%%%%%%%%%%
\draw [->, line width=.8pt]  (0, .1) -- (0, 1.5);
\draw node[below] at (0, 2) {\large $2\cT - \cI \in \cF^\cQ_{\xi, 2\alpha}$};
\draw node[below] at (0, 1) {\large if  $\alpha < 1/2$};

\draw node[below] at (0, -2) {\large if  $\gamma < \frac{1}{1-\alpha}$};
\end{tikzpicture} }
\vskip-.25cm
\caption{The properties of $\cF^\cQ_{\xi,\alpha}$, under the condition of $\cQ \in \cM_\cS^+$.  }
\vskip-.25cm
\end{figure}

\begin{remark}
Many results in Sect. \ref{sec_ave} are useful for analyzing the relaxed version of fixed point iterations (see Sect. \ref{sec_km}), and proving linear convergence under stronger conditions (e.g. the case of $\xi \in\ ]0,1[$ in Proposition \ref{p_dist} and Corollary \ref{c_banach}).
\end{remark}

\section{The associated fixed-point iterations}
\label{sec_fixed}
\subsection{Assumptions}
The convergence of the fixed-point iterations associated with the non-degenerate  metric-based nonexpansive operator $\cT$ has been well understood in literature, see \cite{plc_book,hhb_resolvent,fxue_gopt,plc_vu} for some typical results. In this sequel, we focus on the degenerate case only. More specifically, we make the following assumption on $\cQ$:
\begin{assumption} \label{assume_1}
$\cQ \in \cM_\cS^+$, such that $\kersf \cQ \backslash \{0\} \ne \emptyset$.
\end{assumption}

\begin{remark}
Assumption \ref{assume_1} implies that $\cQ$ has a non-trivial null space in the degenerate case, which is the focus of our discussion.
%
%Under Assumption 1-(ii),  the direct sum decomposition holds: $\cH = \kersf \cQ \oplus (\kersf \cQ)^\bot = 
%\kersf \cQ \oplus \overline{\ran \cQ} 
%= \kersf \cQ \oplus  \ran \cQ$. Let $\cP_{\ran \cQ}$ be the projection of $\cH$ onto $\ran \cQ$ along $\kersf \cQ$, then, we have: $x = \cP_{\ran \cQ} x + \cP_{\kersf \cQ} x$, $\forall x\in \cH$.
\end{remark}

%In particular, any point $x^\star \in C_{\cT,\cQ}$ satisfies:
%\begin{lemma} \label{l_id_Q}
%If the metric $\cQ$ satisfies  Assumption \ref{assume_1}, then, 
%\[
%\big\|x - \cT x^\star \big\|_\cQ = \big\|x -x^\star
%\big\|_\cQ,\quad
%\forall x\in\cH, \ \forall x^\star \in C_{\cT,\cQ}.
%\] 
%\end{lemma}
%\begin{proof}
%Based on $\cQ \cT x^\star = \cQ x^\star$, we develop:
%\begin{eqnarray}
%\big\|x - \cT x^\star \big\|_\cQ^2 
%&=& \big \langle 
%x - \cT x^\star \big| \cQ x - \cQ \cT x^\star \big\rangle 
%= \big \langle 
%x - \cT x^\star \big| \cQ x - \cQ x^\star \big\rangle
%\nonumber \\
%&=& \big \langle 
%\cQ x - \cQ \cT x^\star \big|  x -  x^\star \big\rangle
%=  \big \langle 
%\cQ x - \cQ  x^\star \big|  x -  x^\star \big\rangle
%= \big\|x -  x^\star \big\|_\cQ^2
%\nonumber 
%\end{eqnarray}
%\hfill 
%\end{proof}

We also make several assumptions on $\cT$:
\begin{assumption} \label{assume_2}

{\rm (i)} $\cT \in \cF_{\xi,\alpha}^\cQ$  with $\alpha \in \ ]0,1[$ and $\xi \in \ ]0, 1]$, i.e. $\cT$ is $\cQ$--strongly averaged (see Definition \ref{def_lip}).

{\rm (ii)} The set $\Fix  \cT := \{x\in D | x = \cT x\}$ is non-empty.

{\rm (iii)} $\cT: D\mapsto \cH$ is demiclosed, where $D$ is a nonempty weakly sequentially closed subset of $\cH$.

{\rm (iv)} $\cT$ satisfies $\|\cT x_1  - \cT x_2\| \le L \|x_1-x_2\|_\cQ$ for some constant $L$.

\end{assumption}

\begin{remark}
Assumption \ref{assume_2}-(i) is a conventional condition, commonly used in classical results, to guarantee the basic nonexpansiveness. In (ii), the existence of fixed point set of $\cT$ is a subtle assumption, which, however, is reasonable in many applications, as shown in Sect. \ref{sec_appl}.  

(iii) implies that $\gra \cT$ is sequentially closed in $\cH_\text{\rm weak} \times \cH_\text{\rm strong}$, which is useful to prove the convergence.
Generally speaking, (iv) is a rather restrictive condition, which indicates that all of the useful information of $\cT x$ lies in $\ran \cQ$, instead of the whole space $\cH$. We will see that (iii) and (iv) are essential to prove the boundedness of $\{x^k\}_{k\in\N}$ and the strong convergence of $x^k - \cT x^k \rightarrow 0$ in $\cH$, as $k\rightarrow \infty$. In addition, many degenerate metric resolvents (e.g. discussed in Sect. \ref{sec_appl}) satisfy  this  rigid requirement.
\end{remark}

\vskip.2cm 
We define a fixed point as $x^\star \in  \Fix  \cT $. The  $\cQ$--based solution distance and  sequential error  of the $k$--th iterate are defined by $\|\bb^k - x^\star\|_\cQ$ and $\|x^{k+1} - x^k\|_\cQ$, respectively.
The $\cQ$--based sequential error is closely related to {\it $\cQ$--asymptotic regularity}, which is an extended version of {\it asymptotically regular} \cite{koh,bau_2003}.
\begin{definition} 
{\rm A mapping $\cT: D \mapsto \cH $ is {\it $\cQ$--asymptotically regular}, if $\| \cT^k \bb - \cT^{k+1} \bb \|_\cQ \rightarrow 0$}, as $k \rightarrow \infty$, $\forall \bb \in D$. Here, $\cT^k$ is defined as: $\cT^k := \underbrace{\cT \circ \cdots \circ \cT}_\text{$k$ times}$.
\end{definition}

Clearly, if $\cT$ is $\cQ$--asymptotically regular, the $\cQ$--based sequential error vanishes, as $k \rightarrow \infty$. However, it does not necessarily yield the strong convergence of $x^k-x^{k+1} \rightarrow 0$, due to the degeneracy of $\cQ$. Nonetheless, if Assumption \ref{assume_2}-(iv) is taken into account, we can obtain the following important observation:
\begin{fact} \label{f_reg}
Under Assumption \ref{assume_2}-(iv), if $\cT$ is $\cQ$--asymptotically regular, then it is also asymptotically regular.
\end{fact}
\begin{proof}
By Assumption \ref{assume_2}-(iv), we have $\|\cT^{k+1} x - \cT^{k+2} x\| \le L \|\cT^k x - \cT^{k+1} x\|_\cQ$, which yields the desired result by taking $k \rightarrow \infty$.
\hfill 
\end{proof}

\subsection{Banach-Picard iteration}
Considering the scheme
\be \label{banach}
\bb^{k+1} := \cT \bb^k,
\ee
the properties of  metric-based distances of \eqref{banach} are given as follows. 
\begin{proposition}[Convergence in $\ran \cQ$] \label{p_dist}
Let $\bb^0\in D$, $\{\bb^k\}_{k \in \N}$ be a sequence generated by \eqref{banach}.  Denote $\nu:= 1-\alpha + \alpha \xi^2$. Under Assumptions \ref{assume_1} and    \ref{assume_2}-(i-ii), the following hold.

{\rm (i)} $\cT$ is $\cQ$--asymptotically regular.

{\rm (ii) [Sequential error]}   $\|\bb^{k+1 } -\bb^{k} \|_\cQ$ has the pointwise sublinear convergence rate of $\cO(1/\sqrt{k})$:
\[
\big\|\bb^{k +1} -\bb^{k} \big\|_\cQ
\le \frac{1}{\sqrt{k+1}}   \sqrt{ \frac{\alpha}{1-\alpha} }
\big\|\bb^{0} -\bb^\star \big\|_\cQ, \quad
\forall k \in \N.
\]

{\rm (iii) [$q$--linear convergence]} If $\xi \in \ ]0, 1[$,  both $\|\bb^{k} - x^\star \|_\cQ$ and $\|\bb^{k} -\bb^{k+1} \|_\cQ$  are $q$--linearly convergent with the rate of $\sqrt{\nu} $.

{\rm (iv) [$r$--linear convergence]}  If $\alpha \in \big] 1-\frac{1} {\sqrt{ 2} } , 1 \big[$,
$\xi \in \Big] 0, \sqrt{ 1 -  \frac{ 2- \sqrt{2}} { 2  \alpha} } \Big] $, 
 $\big\| \bb^k - \bb^{k+1} \big\|_\cQ$  is globally $r$--linearly convergent w.r.t. $\big\| \bb^0 - x^\star \big\|_\cQ$:
\[
    \big\| \bb^k - \bb^{k+1} \big\|_\cQ
   \le\sqrt{  \frac{2\alpha (1-\nu) } {(1-\alpha) \nu }  }
    \cdot \nu^{\frac{k+1}{2} }  
\big\| \bb^0 - \bbstar \big\|_\cQ.
\]

{\rm (v) [Weak/strong convergence in $\ran \cQ$]} If $\xi=1$ or  $\xi \in \ ]0,1[$, there exists $x^\star \in \Fix \cT$, such that $\sqrt{\cQ} \bb^k \rightharpoonup \text{\rm or\ }  \rightarrow \sqrt{\cQ} x^\star$ respectively, as $k\rightarrow \infty$. 
\end{proposition}

\vskip.2cm
\begin{proof}
(i)  Taking $\bb_1 = \bb^k$ and $\bb_2 = x^\star \in \Fix  \cT $ in \eqref{xi_alpha}, we obtain:
\be \label{x12}
\big\|  \bb^{k+1} -\bbstar \big\|_\cQ^2
  \le   \nu \big\| \bb^k - \bbstar \big\|_\cQ^2
- \frac{1-\alpha}{\alpha} \big\|  \bb^k -  \bb^{k+1} \big\|_\cQ^2.
\ee
Noting $\nu \in \ ]1-\alpha, 1]$, and summing up \eqref{x12} from $k=0$ to $K$ yields:
\be \label{x34}
\sum_{k=0}^{K} \big\| \bb^k - \bb^{k+1} \big\|
_\cQ^2 \le  \frac{\alpha}{1-\alpha} 
\big\| \bb^0 - \bbstar \big\|_\cQ^2.
\ee 
Taking $K \rightarrow \infty$, we have: $\sum_{k=0}^{\infty} \big\| \bb^k - \bb^{k+1} \big\|_\cQ^2 \le \frac{\alpha}{1-\alpha}  \big\| \bb^{0} - \bb^\star \big\|_\cQ^2 < +\infty$, which   implies that  $\lim_{k\rightarrow \infty} \|\bb^k-\bb^{k+1}\|_\cQ =  0$.

\vskip.2cm
(ii)  Taking $\bb_1 = \bb^k$ and  $\bb_2 = \bb^{k+1}$ in \eqref{xi_alpha}, we have:
\be  \label{x33}
\big\|  \bb^{k+1} -\bb^{k+2} \big\|_\cQ^2
  \le    \nu \big\| \bb^k - \bb^{k+1}  \big\|_\cQ^2.
\ee
 $\nu \in \ ]1-\alpha, 1]$ implies that  $\| \bb^k - \bb^{k+1}  \|_\cQ$ is non--increasing. Then, (ii) follows from \eqref{x34}.

\vskip.2cm
(iii) If $\xi \in \  ]0,1[$, \eqref{x12} yields that
$\big\|  \bb^{k+1} - x^\star \big\|_\cQ^2
 \le \nu   \big\| \bb^k - x^\star \big\|_\cQ^2$, 
where $\nu \in\  ]1-\alpha, 1[$. The $\cQ$--based sequential error follows from   \eqref{x33}.

\vskip.2cm
(iv) If $\xi \in\ ]0,1[$, combining \eqref{x33} with \eqref{x34} yields:
\[
\Big( \nu^{-k} + \nu^{-(k-1)} + \cdots + 1\Big)
  \big\| \bb^k - \bb^{k+1} \big\|_\cQ^2
   \le  \frac{\alpha}{1-\alpha} 
\big\| \bb^0 - \bbstar \big\|_\cQ^2, 
\]
which leads to:
\[
    \big\| \bb^k - \bb^{k+1} \big\|_\cQ^2
   \le  \frac{\alpha (1-\nu) } {(1-\alpha) \nu } \cdot 
    \frac {1 }  {\nu^{-(k+1)} - 1} 
\big\| \bb^0 - \bbstar \big\|_\cQ^2.
\]

Clearly, if $ \nu^{-(k+1)} - 1   \ge \frac{1}{2} \nu^{-(k+1)}$, (i.e. $k \ge  \frac{\ln 2} {\ln (1/\nu) } - 1$), $\big\| \bb^k - \bb^{k+1} \big\|_\cQ^2$
is $r$--linearly convergent w.r.t. $\big\| \bb^0 - x^\star \big\|_\cQ^2$:
\[
    \big\| \bb^k - \bb^{k+1} \big\|_\cQ^2
   \le  \frac{2\alpha (1-\nu) } {(1-\alpha) \nu } \cdot 
    \nu^{ k+1  } \big\| \bb^0 - x^\star \big\|_\cQ^2. 
\]
Furthermore, if $ \frac{\ln 2} {\ln (1/ \nu ) } - 1 \le 1$, the $r$--linear convergence is globally valid for $\forall k \in \N$. This condition can be simplified as 
$\xi^2 \le  1 -  \frac{  2-\sqrt{2} } { 2  \alpha}  $.  

\vskip.2cm
(v) If $\xi=1$, the weak convergence of $\{\sqrt{\cQ} x^k\}_{k \in \N}$ is clear, by basic nonexpansive properties \cite[Theorem 5.14-(i), Example 5.18]{plc_book} of Fej\'{e}r monotonicity \cite[Proposition 5.4, Theorem 5.5]{plc_book}. 

In the case of $\xi \in \ ]0, 1[$, the linear convergence of $\{\sqrt{\cQ} x^k\}_{k \in \N}$ immediately follows by \cite[Theorem 5.12]{plc_book}.
\hfill 
\end{proof}

\begin{remark}
As emphasized above, one cannot conclude from Proposition \ref{p_dist} the convergence of  $\{x_k\}_{k\in\N}$ in the whole space, since the $\cQ$--metric distance does not infer anything about the projection of $x^k$ onto $\kersf \cQ$, which, however, has to be taken into account for the convergence in the whole space. 
\end{remark}

\vskip.2cm
The following theorem is a main result of this paper, which shows the convergence of $x^k$ in $\cH$ under additional Assumption \ref{assume_2}-(iii-iv). The proof adopts some techniques in \cite[Theorem 2.1]{fxue_gopt}.

\begin{theorem} [Weak convergence in $\cH$]  \label{t_banach}
Let $\bb^0\in D$, $\{\bb^k\}_{k \in \N}$ be a sequence generated by \eqref{banach}. Under Assumptions \ref{assume_1} and \ref{assume_2}, if $\xi=1$, 
then there exists $\bbstar \in \Fix \cT$, such that $\bb^k \rightharpoonup \bbstar$, as $k\rightarrow \infty$.
\end{theorem}

\begin{proof}
Following the reasoning of  the well-known Opial's lemma \cite{opial}\footnote{Refer to \cite[Lemma 2.47]{plc_book} or \cite[Lemma 2.1]{attouch_2001} for the Opial's argument.}, the proof is divided into 4 steps\footnote{This line of reasoning is very similar to Fej\'{e}r monotonicity, see \cite[Proposition 5.4,  Theorem 5.5]{plc_book} for example.}:

(i) for every $x^\star \in \Fix \cT $, $\lim_{k\rightarrow \infty} \| x^{k} - x^\star \|_\cQ  $ exists; 

(ii) the sequence $\{x^k\}_{k\in\N}$ is bounded;

(iii) if $x^{k_i} \rightharpoonup x^*$ weakly in $\cH$ for a subsequence ${k_i} \rightarrow \infty$, then $x^* \in \Fix \cT $;

(iv)  $\{x^k\}_{k\in\N}$ possesses at most one weak sequential cluster point in $\Fix \cT$.

\vskip.2cm
(i) \eqref{x12}  shows that   
 $\{ \| x^{k} - x^\star \|_\cQ \}_{k\in\N} $ is non-increasing, and bounded from below (always being non-negative), and thus, convergent, i.e. $\lim_{k\rightarrow \infty} \| x^{k} - x^\star \|_\cQ$ exists.  
 
\vskip.1cm
(ii) By Assumption \ref{assume_2}-(iv), we have:
\[
\big\|  x^{k+1} -  x^\star \big\| = \big\| \cT x^k -\cT x^\star \big\|  \le L \big\|x^k - x^\star\big\|_\cQ  
 \le L \big\|x^0 - x^\star\big\|_\cQ,\quad 
 \forall k\in\N,
\]
where the last inequality comes from (i). It implies that $\{x^k\}_{k\in \N}$ is bounded.

\vskip.1cm
(iii) Since $\{x^k\}_{k\in \N}$ is bounded, by \cite[Lemma 2.45]{plc_book}, $\{x^k\}_{k\in\N}$ has at least  one weak sequential cluster point, i.e. $\{x^k\}_{k\in\N}$ has a subsequence $\{x^{k_i}\}_{i\in\N}$ that weakly converges to a point $x^*$, denoted by $x^{k_i} \rightharpoonup x^*$, as $k_i \rightarrow \infty$. Our aim  is to show that $x^* \in \Fix \cT $, and more generally,  every weak sequential cluster point of  $\{x^k\}_{k\in\N}$ belongs to $\Fix \cT $. To this end, combining Assumption \ref{assume_2}-(iv) with the claim (i), the weakly convergent subsequence $\{x^{k_i}\}_{k\in\N}$  satisfies:
\[
\big\|x^{k_i+1}-x^{k_i+2} \big\| = \big\|\cT x^{k_i }- \cT x^{k_i+1} \big\| \le L \big\|x^{k_i}-x^{k_i+1} \big\|_\cQ \rightarrow 0, \quad \text{as\ } k_i\rightarrow \infty
\]
which shows that $x^{k_i}-x^{k_i+1}=x^{k_i}-\cT x^{k_i} \rightarrow 0$, as $k_i\rightarrow \infty$ (this is also Fact \ref{f_reg}). 
Since  $x^{k_i} \rightharpoonup x^*$ as $k_i\rightarrow \infty$, we conclude that  $ x^* - \cT x^* =0$ due to  the demiclosedness of $\cT$ (i.e. Assumption \ref{assume_2}-(iii) that implies that $\gra (\cI-\cT)$ is sequentially closed in $\cH_\text{weak} \times \cH_\text{strong}$).  Thus, for every weak sequential cluster point $x^*$ of  $\{x^k\}_{k\in\N}$, $x^* \in \Fix \cT $. 

\vskip.1cm
(iv) We need to show that   $\{x^k\}_{k\in\N}$ cannot have two distinct weak sequential cluster point in $\Fix \cT $. To this end, let $x_1^*, x_2^{*} \in \Fix  \cT $ be two cluster points of  $\{x^k\}_{k\in\N}$. Set $\eta_1 = \lim_{k\rightarrow \infty} \|x^k - x_1^*\|_\cQ$, and  $\eta_2 = \lim_{k\rightarrow \infty} \|x^k - x_2^*\|_\cQ$. Take a subsequence $\{x^{k_i} \}$ weakly converging to $x_1^*$, as $k_i \rightarrow \infty$. From the identity
\[
\big\| x^k-x_1^*\big\|_\cQ^2 - \big\|x^k-x_2^* \big\|_\cQ^2
=\big\|x_1^*-x_2^*\big\|_\cQ^2 +2 \big\langle \cQ
(x_1^*-x_2^*) \big|  x_2^* - x^k \big\rangle,
\]
we deduce that  $\eta_1 - \eta_2 =- \big\|x_1^*-x_2^*\big\|_\cQ^2$ by taking $k\rightarrow \infty$ on both sides. Similarly,  take a subsequence $\{x^{l_i} \}$ weakly converging to $x_2^*$, as $l_i \rightarrow \infty$, which yields that $\eta_1 - \eta_2 = \big\|x_1^* - x_2^*\big\|_\cQ^2$. Consequently, $\big\|x_1^*- x_2^*\big\|_\cQ=0$, i.e. $x_1^* - x_2^* \in \kersf \cQ$. Furthermore,  Assumption \ref{assume_2}-(iv) yields 
$\big\| \cT x_1^*  - \cT x_2^*\big\| \le L \big\| x_1^*  - x^*_2 \big\|_\cQ=0$,  which results in $\cT x_1^*=\cT x_2^*$, and thus, $x^*_1=x^*_2$, since $x_1^*,x_2^* \in \Fix \cT$. This shows the uniqueness of the weak sequential cluster point, denoted by $x^\star$.

\vskip.1cm
Finally, to summarize,  $\{x^k\}_{k\in\N}$  is bounded and possesses a unique weak sequential cluster point $x^\star \in \Fix \cT $. Then, the weak convergence is established by \cite[Lemma 2.46]{plc_book}.
\hfill 
\end{proof}

\vskip.1cm
It is much easier to prove the strong convergence of \eqref{banach} in the case of $\xi \in\ ]0,1[$.
\begin{corollary} [Strong convergence in $\cH$]  \label{c_banach}
Let $\bb^0\in D$, $\{\bb^k\}_{k \in \N}$ be a sequence generated by \eqref{banach}. Under Assumptions \ref{assume_1} and \ref{assume_2}, if $\xi \in\ ]0,1[$,  then there exists $\bbstar \in \Fix \cT$, such that $\bb^k \rightarrow \bbstar$, as $k\rightarrow \infty$.
\end{corollary}

\begin{proof}
If $\xi \in\ ]0,1[$, then $\nu <1$. For $x^\star \in \Fix \cT$, combining Assumption \ref{assume_2}-(iv) with \eqref{x12}, it yields:
\[
\big\|  \bb^{k+1} -\bbstar \big\|^2
=\big\| \cT \bb^{k} - \cT \bbstar \big\|^2
  \le   L \big\| \bb^k - \bbstar \big\|_\cQ^2
  \le L \nu^k \big\| \bb^0 - \bbstar \big\|_\cQ^2, 
\]
which concludes  the strong convergence of $x^k \rightarrow x^\star$, as $k\rightarrow \infty$.
\hfill 
\end{proof}

\vskip.2cm
The following results build the connection of the convergence properties with the cocoerciveness of $\cT$.
\begin{proposition}[Convergence of \eqref{banach}] \label{p_banach_coco}
Let $\bb^0\in D$, $\{\bb^k\}_{k \in \N}$ be a sequence generated by \eqref{banach}, with $\cT$ being $\cQ$--based $\beta$--cocoercive with $\beta \in [ 1, +\infty[$. Under Assumptions \ref{assume_1} and \ref{assume_2}-(i-ii), the following hold.

{\rm (i)} $\cT$ is $\cQ$--asymptotically regular.

{\rm (ii) [Sequential error]}   $\|\bb^{k+1 } -\bb^{k} \|_\cQ$ has the pointwise sublinear convergence rate of $\cO(1/\sqrt{k})$:
\[
\big\|\bb^{k +1} -\bb^{k} \big\|_\cQ
\le \frac{1}{\sqrt{k+1}} \cdot 
\sqrt{2\beta - 1} \big\|\bb^{0} -\bb^\star \big\|_\cQ,
\quad  \forall k \in \N.
\]

{\rm (iii) [$q$--linear convergence]} If $\beta \in \  ]1, +\infty [ $,  both $\|\bb^{k} -\bbstar \|_\cQ$ and $\|\bb^{k} -\bb^{k+1} \|_\cQ$  are $q$--linearly convergent with the rate of $\frac{1}{\sqrt{2\beta-1}}$.

{\rm (iv) [$r$--linear convergence]}  If $\beta \in [ 
\frac{ \sqrt{2}+1} {2}, +\infty [ $,  
 $\big\| \bb^k - \bb^{k+1} \big\|_\cQ$  is globally $r$--linearly convergent w.r.t. $\big\| \bb^0 - \bbstar \big\|_\cQ$:
\[
    \big\| \bb^k - \bb^{k+1} \big\|_\cQ
   \le  2\sqrt{ \beta- 1 }  \cdot  (2\beta-1)^{-\frac{k}{2} }
\big\| \bb^0 - \bbstar \big\|_\cQ.
\]
 
{\rm (v) [Weak/strong convergence in $\ran \cQ$]} If $\beta=1$ or  $\beta \in \ ]1, +\infty[$, there exists $x^\star \in \Fix \cT$, such that $\sqrt{\cQ} \bb^k \rightharpoonup \text{\rm or\ } \rightarrow \sqrt{\cQ}  x^\star$ respectively, as $k\rightarrow \infty$.

{\rm (vi) [Weak/strong convergence in $\cH$]}  Under Assumptions \ref{assume_1} and \ref{assume_2}, if $\beta=1$ or $\beta \in \ ]1, +\infty[$, then there exists $\bbstar \in \Fix \cT$, such that $\bb^k \rightharpoonup \text{\ or\ } \rightarrow \bbstar$ respectively, as $k\rightarrow \infty$.
\end{proposition}

\begin{proof}
By Lemma \ref{l_cocoercive}--(i), we have: $\cT \in \cF^\cQ_{\frac{1}{2\beta-1}, 1-\frac{1}{2\beta}}$. Taking $(\bb_1,x_2) = (\bb^k, \bbstar)$, or $(\bb_1,x_2)=( \bb^k,x^{k+1}) $  in \eqref{xi_alpha}, respectively,  we have:
\[
\left\{ \begin{array}{lll}
\big\|  \bb^{k+1} -\bbstar \big\|_\cQ^2
& \le &  \frac{1}{2\beta-1} \big\| \bb^k - \bbstar \big\|_\cQ^2
- \frac{1}{2\beta - 1} \big\|  \bb^k -  \bb^{k+1} \big\|_\cQ^2, \\
\big\|  \bb^{k+1} -\bb^{k+2} \big\|_\cQ^2
& \le &  \frac{1}{2\beta-1} \big\| \bb^k - \bb^{k+1} \big\|_\cQ^2.
\end{array} \right.
\]
The rest of proof is similar to  Proposition \ref{p_dist}, Theorem \ref{t_banach} and Corollary \ref{c_banach}. 
\hfill 
\end{proof}

\vskip.1cm
\begin{remark}
Theorem \ref{t_banach} and  Corollary \ref{c_banach} are closely linked to Proposition \ref{p_banach_coco}, if $\cT \in \cF^\cQ_{\xi,\alpha}$ is also $\beta$--cocoercive. This connection can be immediately obtained by Theorem \ref{t_lip}--(iii). Indeed, if $\xi \le \min\{\frac{1-\alpha}{\alpha}, 1\}$, 
$\cT \in \cF^\cQ_{\xi,\alpha}$ is $\cQ$--firmly nonexpansive (by Theorem \ref{t_lip}--(ii)), and also $\beta$--cocoercive with $\beta=\frac{1}{2} (1+ \frac{1}{1-\alpha+\alpha \xi^2}) \ge 1$ (by Theorem \ref{t_lip}--(iii)). According to Theorem \ref{t_banach} and  Corollary \ref{c_banach}, 
 $\xi \in\ ]0,1]$ is sufficient to guarantee the convergence, while $\cT$ is not necessarily $\cQ$--firmly nonexpansive. This implies that  the $\cQ$--firm nonexpansiveness of $\cT$ is an over--sufficient condition for the convergence of \eqref{banach}.

If $\cT \in \cF^\cQ_{\xi,\alpha}$ is $\beta$--cocoercive, 
$\xi \le \min\{\frac{1-\alpha}{\alpha}, 1\}$ guarantees the convergence (by Proposition \ref{p_banach_coco}), while $\cT$ is also $\cQ$--firmly nonexpansive (by Theorem \ref{t_lip}--(ii)). In this sense, Proposition \ref{p_banach_coco} is somewhat a special case of Theorem \ref{t_banach} and  Corollary \ref{c_banach}. Note that in Theorem \ref{t_banach} and  Corollary \ref{c_banach}, the convergence condition $\xi \in\ ] 0, 1]$ cannot guarantee the cocoerciveness of $\cT$. For instance, when $\alpha \in\  ] \frac{1}{2}, +\infty[ $ and $\xi \in\  ]\frac{1-\alpha}{\alpha}, 1]$, \eqref{banach} is convergent, but $\cT$ is not cocoercive.
\end{remark}

\subsection{Krasnosel'ski\u{\i}-Mann  algorithm}
\label{sec_km}
Consider the iteration:
\be \label{km}
\bb^{k+1} :=  \bb^k + \gamma(\cT \bb^k - \bb^k)
: = \cT_\gamma \bb^k,
\ee
where $\cT_\gamma = \cI - \gamma (\cI - \cT)$ and  $\cT \in \cF^\cQ_{\xi,\alpha}$.

\begin{corollary}[Convergence  of \eqref{km}] \label{c_km}
Let $\bb^0\in D$, $\{\bb^k\}_{k \in \N}$ be a sequence generated by \eqref{km}. Denote $\nu: = 1-\gamma\alpha + \gamma \alpha \xi^2$. Under Assumption \ref{assume_1} and \ref{assume_2}-(i-ii), if $\gamma \in \ ]0, 1/\alpha [$,  the following hold.

{\rm (i)} $\cT$ is $\cQ$--asymptotically regular.

{\rm (ii) [Sequential error]}   $\|\bb^{k+1 } -\bb^{k} \|_\cQ$ has the pointwise sublinear convergence rate of $\cO(1/\sqrt{k})$:
\[
\big\|\bb^{k +1} -\bb^{k} \big\|_\cQ
\le \frac{1}{\sqrt{k+1}}   \sqrt{ \frac{\gamma\alpha}{1- \gamma\alpha} }
\big\|\bb^{0} -\bb^\star \big\|_\cQ, \quad
\forall k \in \N.
\]

{\rm (iii) [$q$--linear convergence]} If $\xi \in \ ]0, 1[$,  both $\|\bb^{k} - x^\star \|_\cQ$ and $\|\bb^{k} -\bb^{k+1} \|_\cQ$  are $q$--linearly convergent with the rate of $\sqrt{\nu} $.

{\rm (iv) [$r$--linear convergence]}  If $\gamma \alpha \in \big] 1-\frac{1} {\sqrt{ 2} } , 1 \big[$,
$\xi \in \Big] 0, \sqrt{ 1 -  \frac{ 2- \sqrt{2}} { 2 \gamma \alpha} } \Big] $, 
 $\big\| \bb^k - \bb^{k+1} \big\|_\cQ$  is globally $r$--linearly convergent w.r.t. $\big\| \bb^0 - x^\star \big\|_\cQ$:
\[
    \big\| \bb^k - \bb^{k+1} \big\|_\cQ
   \le\sqrt{  \frac{2\gamma\alpha (1-\nu) } {(1-\gamma\alpha) \nu }  }
    \cdot \nu^{\frac{k+1}{2} }  
\big\| \bb^0 - \bbstar \big\|_\cQ. 
\]

{\rm (v) [Weak/strong convergence in $\ran \cQ$]} If $\xi=1$ or  $\xi \in \ ]0,1[$, there exists $x^\star \in \Fix \cT$, such that $\sqrt{\cQ} \bb^k \rightharpoonup \text{\rm or\ } \rightarrow \sqrt{\cQ}  x^\star$ respectively, as $k\rightarrow \infty$. 

{\rm (vi) [Weak/strong convergence in $\cH$]}  Under Assumptions \ref{assume_1} and \ref{assume_2}, if $\xi=1$ or $\xi \in \ ]0, 1 [$, then there exists $\bbstar \in \Fix \cT$, such that $\bb^k \rightharpoonup \text{\ or\ } \rightarrow \bbstar$ respectively, as $k\rightarrow \infty$.
\end{corollary}

\begin{proof} 
First, we claim that  $\Fix \cT_\gamma = \Fix \cT$. Indeed, $\bbstar \in \Fix \cT_\gamma \Longleftrightarrow \bbstar = \bbstar - \gamma (\bbstar - \cT \bbstar) \Longleftrightarrow \bbstar = \cT \bbstar \Longleftrightarrow \bbstar \in \Fix \cT$.

If $\gamma < \frac{1}{\alpha}$,  we deduce  by Theorem \ref{t_lip}--(iv) that: 
\[
\cT \in \cF^\cQ_{\xi,\alpha} \Longrightarrow \cR = \cI - \cT \in \cF^\cQ_{\frac{\alpha \xi}{1-\alpha}, 1- \alpha} 
\Longrightarrow \cT_\gamma = \cI - \gamma \cR  \in \cF^\cQ_{\xi, \gamma \alpha}.
\]

The rest of the proof is similar to Proposition \ref{p_dist}, Theorem \ref{t_banach} and Corollary \ref{c_banach},  just replacing $\alpha$ by $\gamma \alpha$, provided that $\gamma < \frac{1}{\alpha}$. 
\hfill
\end{proof}

\section{Application to metric resolvent}
\label{sec_appl}
\subsection{Basic properties}
Consider the metric resolvent\footnote{It is also called $F$-resolvent in \cite{hhb_resolvent} or warped resolvent \cite{warp}.}:
\be \label{t}
\cT:=(\cA+\cQ)^{-1} \cQ,
\ee
where $\cA: \cH \mapsto 2^\cH$ is a set-valued maximally monotone operator, $\cQ \in \cM_\cS^+$.  It is easy to show that  $\cT \in \cF^\cQ_{1, \frac{1}{2}}$, $\cI - \cT \in \cF^\cQ_{1, \frac{1}{2}}$ \cite{bredies_2017,fxue_gopt}. Furthermore,  if $\cA$ is $\mu$--strongly  monotone,    $\cT \in \cF^\cQ_{\frac{\|\cQ\|}{2\mu + \|\cQ\|}, \frac{2\mu +\|\cQ\|} {2\mu +2\|\cQ\|} }$,    $\cI - \cT  \in \cF^\cQ_{1, \frac{\|\cQ\|} { 2(\|\cQ\| +  \mu ) } }$. Then, the convergence properties of the Banach-Picard iteration:
\be  \label{ppa}
\bb^{k+1} := (\cA+\cQ)^{-1} \cQ \bb^k
\ee
immediately follow from Proposition \ref{p_dist}, Theorem \ref{t_banach} and Corollary \ref{c_banach}, by substituting $\xi$ and $\alpha$ with proper quantities, if the corresponding assumptions are satisfied. 

Considering the Krasnosel'ski\u{\i}-Mann  iteration:
\be  \label{rppa}
\bb^{k+1} := \bb^k + \gamma \big( (\cA+\cQ)^{-1} \cQ \bb^k
-\bb^k \big),
\ee
it is easy to show that $ \cT_\gamma \in \cF^\cQ_{1, \frac{\gamma}{2}}$ from the proof of Corollary \ref{c_km}. Furthermore,  if $\cA$ is $\mu$--strongly  monotone, $ \cT_\gamma \in  \cF^\cQ_{\frac{  \|\cQ\|}
{ 2  \mu + \|\cQ\|},    
\frac { \gamma( 2  \mu +  \|\cQ\|)} {2\mu+ 2 \|\cQ\|}  }$.  Then, the convergence properties of \eqref{rppa}  follow from Corollary \ref{c_km}, if the corresponding assumptions are fulfilled.

\subsection{Reinterpretation of primal-dual hybrid gradient   algorithm}
\label{sec_eg}
The primal-dual hybrid gradient (PDHG)  algorithm, for solving $\min_u\  f(u) + g(A u)$\footnote{Here, the functions $f$ and $g$ are assumed to be proper, lower semi-continuous and convex.}, is given as \cite{cp_2011,esser}: 
\be \label{pdhg}
\left\lfloor \begin{array}{lll}
 s^{k+1}   & := &  \prox_{\sigma g^*}  \big( s^k +\sigma 
 A u^k   \big) ,\\
u^{k+1}   & := &  \prox_{\tau f}  \big( u^k - \tau   
A^* (2s^{k+1} -  s^k )   \big).
\end{array} \right. 
\ee
It exactly fits into the form of  metric resolvent \eqref{ppa}: 
\be \label{ppa_pdhg}
\begin{bmatrix}   s^{k+1} \\  u^{k+1}
\end{bmatrix} = \bigg( \underbrace{  \begin{bmatrix}
\partial g^* & -A \\
A^*   & \partial f    \end{bmatrix} }_\cA
 + \underbrace{    \begin{bmatrix}
\frac{1}{ \sigma}  I  & A \\
A^*   & \frac{1}{ \tau}  I 
\end{bmatrix} }_\cQ  \bigg)^{-1} 
\underbrace{  \begin{bmatrix}
\frac{1}{ \sigma} I & A \\
 A^*   & \frac{1}{ \tau} I 
\end{bmatrix} }_\cQ   \begin{bmatrix}  
 s^k \\  u^{k}
\end{bmatrix}.
\ee

For this specific case of \eqref{ppa_pdhg}, we have the following basic observations:
\begin{itemize}
\item $\cA$ is maximally monotone;
\item $\cQ$ is self-adjoint and PSD, if $\tau\sigma \le \frac{1}{\|A^*A\|}$;
\item $\cT \in \cF^\cQ_{1, \frac{1}{2}}$;
\item $\Fix \cT = \zer \cA$.
\end{itemize}
Here, Assumption \ref{assume_2}-(ii) is satisfied, as long as $\zer \cA \ne \emptyset$, i.e. there exists a point $(u^\star,s^\star)$ satisfying  the Karush-Kuhn-Tucker conditions. This is a reasonable assumption under this context.

Based on the above results, it is needless to discuss the non-degenerate case when $\tau\sigma < \frac{1}{\|A^*A\|}$. We are mainly concerned with  the degenerate metric  when $\tau\sigma = \frac{1}{\|A^*A\|}$. We now claim that \eqref{ppa_pdhg} satisfies Assumption \ref{assume_2}-(iv). Indeed, 
\[
\big\|\cT x_1 - \cT x_2 \big\| \le 
\big\| (\cA+\cQ)^{-1} \cQ x_1 - (\cA+\cQ)^{-1} \cQ x_2 \big\| \le 
\big\| (\cA+\cQ)^{-1} \big\| \cdot \big\| \sqrt{ \cQ} \big\| \cdot \| x_1 - x_2 \big\|_\cQ ,
\]
where 
\[
 (\cA+\cQ)^{-1}: (s,u)\mapsto \big( \prox_{\sigma g^*}(\sigma s), \prox_{\tau f} (-2\tau A^* \prox_{\sigma g^*}(\sigma s) ) + \prox_{\tau f} (\tau u) \big).
\]
This is a composition of Lipschitz functions, and the Lipschitz constant $L$ is not relevant in this context. Finally, we have verified that \eqref{ppa_pdhg} with degenerate metric $\cQ$ satisfies all of Assumptions \ref{assume_1} and \ref{assume_2}, and thus, the results in Sect. \ref{sec_operator} and \ref{sec_fixed} can be applied.

\section{Concluding remarks}
We investigated in details the nonexpansive mappings in the context of arbitrary  metric, and particularly discussed the convergence  of the associated fixed-point  iterations under the setting of degenerate metric.

There are more prospective applications of our results. Besides from PDHG, more splitting algorithms can be reformulated as the metric resolvent, many of them correspond to degenerate metric. In addition, our results can be extended to analyze more related concepts, e.g. generalized proximity operator, Bregman proximal map, variable metric Fej\'{e}r sequence, especially equipped with degenerate metric.

\section{Data availability}
There is no associated data with this manuscript.

\section{Disclosure statement}
The author declares there are no conflicts of interest regarding the publication of this paper.

%\subsubsection*{References}
%\pdfbookmark[0]{References}{references} % add a nice PDF bookmark
\bibliographystyle{siam}

\small{
\bibliography{refs}
}

\end{document}